\title{An abundance-type result for the tangent bundles of smooth Fano varieties\\
\textcolor{red}{Important Notice (Update as of December 3, 2025):
A gap has been identified in the Step 2 of the proof of Lemma 4.3 in the initial version (v1) of this article, thus invalidates the proof of the main theorem; an erratum is added to the end of the paper to explain this error. The author is currently preparing a corrected version, which is expected to be completed within approximately one month. Until the revision is available, readers are advised to exercise caution when referring to this work. The author sincerely apologizes for any inconvenience caused.}}
\author{Juanyong Wang}
\date{}
\begin{document}

\maketitle

\paragraph{Abstract:}
In this paper we prove the following abundance-type result: for any smooth Fano variety $X$, the tangent bundle $T_X$ is nef if and only if it is big and semiample in the sense that the tautological line bundle $\scrO_{\PP T_X}(1)$ is so, by which we establish a weak form of the Campana-Peternell conjecture (Camapan-Peternell, 1991). 

\paragraph{Keywords:}
minimal rational curves; Segre classes; augmented base locus; Minimal Model Program; subadjunction; 



\tableofcontents



\section*{Introduction}
Throughout this article, we work over the field of complex numbers $\CC$. 
A \emph{fibre space} or a \emph{fibration} is a proper morphism between normal varieties that has connected fibres, and a \emph{$\PP^1$-fibration} is a smooth fibration whose fibres are all isomorphic to $\PP^1$ (cf. \cite[II.2.5, p.~105]{Kollar96}).

Inspired by the work of Mori \cite{Mor79} and of Siu-Yau \cite{SY80} on the Hartshorne-Frankel conjecture and by the works of Howard-Smyth-Wu, Mok and Zhong \cite{HSW81,MZ86,Mok88} on the generalised analytic version of the H-F conjecture, Campana and Peternell proposed in \cite{CP91} to establish the algebro-geometric counterpart of Mok's result, that is, the classification the compact Kähler manifolds with nef tangent bundle. 

Demailly, Peternell and Schneider study the Albanese maps of such Kähler manifolds in \cite{DPS94} and they proved that up to an étale cover the Albanese map of a compact Kähler manifold with nef tangent bundle is a locally trivial fibration, whose fibres are smooth Fano varieties with nef tangent bundles. Hence the problem is reduced to the study of such Fano varieties. In fact, in \cite{CP91} Campana and Peternell raised the following conjecture: 
\begin{conj}[{Campana-Peternell conjecture}]
\label{conj_CP}
Smooth Fano varieties with nef tangent bundles are rational homogeneous, in other word, they are quotients of semi-simple linear algebraic groups by parabolic subgroups. 
\end{conj}  

The {\hyperref[conj_CP]{Campana-Peternell conjecture}} is confirmed for surfaces and threefolds in \cite{CP91} by using the Mori-Mukai classification; the fourfold case is established by combining \cite{CP93} and \cite{Mok02,Hwa06}. However, the higher dimensional cases are still open except the following special cases:
\begin{itemize}
\item if $T_X$ is big and $1$-ample (\cite{SCW04});
\item if the Picard number of $X$ is very large compared to $\dim X$ (\cite{Wat15,Kan16});
\item if $X$ is of flag type, i.e. every elementary contraction of $X$ is a $\PP^1$-fibration (\cite{MOSCW15,OSCWW17}).
\end{itemize}
See \cite{MOSCWW15} for a quite complete survey of recent progress in this study. Following the notation in \cite{MOSCWW15}, in the sequel we will call a smooth Fano variety with nef tangent bundle a \emph{Campana-Peternell manifold} (\emph{CP-manifold} for short).

On the other hand, this conjecture can also be regarded among the \emph{`abundance-type conjectures'}, i.e. they predict that a weak positivity condition (e.g. nefness) for the canonically defined vector bundles (e.g. canonical bundles, tangent bundles, etc.) of a compact K\"ahler manifold can imply a strictly stronger positivity condition (e.g. semiampleness). Indeed, for a smooth Fano variety $X$, it is rational homogeneous if and only if $T_X$ is globally generated, or equivalently, the tautological line bundle $\scrO_{\PP T_X}(1)$ is globally generated.
The main result of the present paper is to establish the following weak form of `abundance-type' for the {\hyperref[conj_CP]{Campana-Peternell conjecture}}.  
 
\begin{mainthm}
\label{main-thm}
Let $X$ be a Campana-Peternell manifold. Then tangent bundle $T_X$ is big and semiample in the sense that the tautological line bundle $\scrO_{\PP T_X}(1)$ is big and semiample. 
\end{mainthm}

Note that the semiamplesness of $\scrO_{\PP T_X}(1)$ in the theorem above is just a corollary of the bigness and the base point free theorem \cite[Theorem 3.3, p.75]{KM98}, moreover, the bigness of $T_X$ is equivalent to that of $T_X\oplus\scrO_X$\,, hence it suffices to establish the following:

\begin{mainthm}
\label{mainthm_DPP}
Let $X$ be a Campana-Peternell manifold. Then the tautological line $\scrO_{\PP(T_X\oplus\,\scrO_X)}(1)$ is big on $\PP(T_X\oplus\,\scrO_X)$.
\end{mainthm}

Moreover, let us remark that, to consider $\PP(T_X\oplus\scrO_X)$ rather than $\PP T_X$ has the following two advantages:
\begin{itemize}
\item In the proof of a key result {\hyperref[lemma_sing-B0]{Lemma \ref*{lemma_sing-B0}}}, the argument (the construction of $\scrB$) works only for $\PP T_X\oplus \scrO_X$;
\item In view of the unpublished work \cite{DPP15}, to establish {\hyperref[conj_CP]{Campana-Peternell conjecture}}, it suffices to show that $T_X\oplus \scrO_X$ is strongly semiample (i.e. some symmetric power is globally generated), thus it is indeed more natural to consider $\PP(T_X\oplus\scrO_X)$. 
\end{itemize}
 
The rough idea of the proof is the following: first we apply Viehweg's fibre product trick to prove that the divisor $L$ that equals the sum of the pullbacks of $\scrO(1)$ via projections, is big (and nef), here we should use the ampleness of $-K_X$; then we study the augmented base locus of $L$, indeed, if $\BB_+(L)$ does not contain the diagonal, then by restriction we can easily establish the bigness of $\scrO(1)$; to prove that the diagonal is not contained in $\BB_+(L)$, it suffices to  show that any projection restricted to $\BB_+(L)$ is not dominant; but as a simple observation we see that $\PP(T_X\oplus\,\scrO_X)$ cannot be covered by rational curves that intersect trivially with $\scrO(1)$, and this leads us to show that $\BB_+(L)$ is covered by rational curves that intersect trivially with $L$. To establish this uniruledness result for $\BB_+(L)$, we first show that $\BB_+(L)$ cannot have divisorial components, this is done in {\hyperref[sec_codim]{\S \ref*{sec_codim}}}; then we construct these rational curves by running an appropriate MMP (see {\hyperref[sec_uniruled]{\S \ref*{sec_uniruled}}}).

\paragraph{Acknowledgement}
The author is grateful to his former thesis supervisor Sébastien Boucksom for asking him a question on this problem in 2018, which is the starting point of this work. He would also like to thank his former supervisor Junyan Cao for many helpful discussions. The author wants to express his gratitude to Professor Xiangyu Zhou for pointing out some important references and for explaining to him the geometric meaning of the $L^2$-extension theorems. His gratitude also goes to Professor Shini-ichi Matsumura who has pointed to him some important references and given him some suggestions. The author owes a lot to Professor Andreas H\"oring, who has read the preliminary version of this paper, for his suggestions and inspiring questions. The author is supported by the National Natural Science Foundation of China (Grant No. 12288201 and No. 12301060) and the National Key R\&D Program of China (Grant No. 2021YFA1003100).

\section{Preliminaries}
\label{sec_preliminaries}
The aim of this section to state some preliminary results that will be used in the proof of our main theorem.

\subsection{Preliminary results from the theory of MMP}
\label{ss_pre_MMP}
In this subsection we collect some results from the minimal model theory. Let us remark that these results are valid (and stated in their original form) for any projective morphism $X\to U$ between quasi-projective normal varieties, while the versions cited here are stated in the form of taking $U=\pt$.  

First we need the following cone theorem for lc pairs:
\begin{thm}[{\cite[Theorem 3.8.1 and Corollary 3.8.2]{BCHM10}}]
\label{thm_lc-cone}
Let $(X,\Delta)$ be a projective lc pair of dimension $n$ with $K_X+\Delta$ being $\RR$-Cartier. Suppose there is a divisor $\Delta_0$ on $X$ such that $(X,\Delta_0)$ is klt.
\begin{itemize}
\item[\rm(a)] If $R$ is an extremal ray of $\NEb(X)$ that is $(K_X+\Delta)$-negative, then there is a rational curve $\Gamma\subseteq X$ such that $R=\RR_{\geqslant 0}\cdot[\Gamma]$ and $0<-(K_X+\Delta)\cdot\Gamma\leqslant 2n$. 
\item[\rm(b)] If we can write $\Delta=A+B$ with $A\geqslant 0$ ample and $B\geqslant 0$, then there are only finitely many $(K_X+\Delta)$-negative extremal rays in $\NEb(X)$. 
\end{itemize}
\end{thm}
The key ingredient that will be used later is the finiteness of the number of extremal rays. 
And we also need the following result on the MMP for lc pairs in \cite{Bir12}:
\begin{thm}[{\cite[Corollary 1.2 and Theorem 1.9]{Bir12}}]
\label{thm_lc-flip}
Let $(X,\Delta)$ be a projective lc pair.
\begin{itemize}
\item[\rm(a)] For any extremal $(K_X+\Delta)$-negative flipping contraction, its $(K_X+\Delta)$-flip exists. 
\item[\rm(b)] Suppose that $(X,\Delta)$ is $\QQ$-factorial dlt, and we can write $\Delta=A+B$ with $A\geqslant 0$ ample and $B\geqslant 0$, then the $(K_X+\Delta)$-MMP with scaling of a sufficiently ample divisor terminates.
\end{itemize}
\end{thm}

\subsection{Rational curves on CP-manifolds}
\label{ss_pre_RC}
In this subsection we review some results concerning the Mori cones and the families of rational curves of CP-manifolds. These results have essentially appeared in \cite[\S 2.3 and \S 3]{MOSCWW15}. 

\begin{prop}
\label{prop_cone-CP-mfd}
Let $X$ be a CP-manifold of Picard number $\rho$. Then $\rank N^1(X)=\rank N_1(X)=\rho$ and we have  
\begin{itemize}
\item[{\rm(a)}] The Mori cone $\NEb(X)$ of $X$ is a simplicial (polyhedral) cone, generated by $\rho$ extremal rays $R_1,\cdots, R_\rho$\,; and for each $i$, $R_i=\RR_{\geqslant 0}\cdot[\Gamma_i]$ for some rational curve $\Gamma_i$. 
\item[{\rm(b)}] Let $J\subseteq I:=\{\,1,\cdots,\rho\,\}$ be any subset, then there is an extremal contraction of the extremal face generated by $\{R_i\}_{i\in J}$, denoted by $\phi_J: X\to Y_J$. Moreover, $Y_J$ is a CP-manifold of Picard number $\rho-\#J$ and $\phi_J$ is a smooth fibration whose fibres are CP-manifolds.
\item[{\rm(c)}] The nef cone and the pseudoeffective cone of $X$ coincide, it is the simplicial cone generated by $[\phi_{I_j}^\ast A_{Y_{I_j}}]$ where $I_j:=I\backslash\{j\}$ and $A_{Y_{I_j}}$ is an ample divisor that generates the Picard group of $Y_{I_j}$ (here we use the notations of (b)). In particular, every pseudoeffective divisor on $X$ is semiample.
\end{itemize}
\end{prop}

\begin{proof}
(a) is just \cite[Corollary 3.2]{MOSCWW15} combined with the Cone Theorem (q.v.~\cite[Theorem 3.7, p.76]{KM98}) for Fano varieties. (b) has been stated in \cite[Theorem 5.2]{DPS94} and the correct proof is given in \cite[Theorem 4.4]{SCW04}, see also \cite[Theorem 3.3]{MOSCWW15}. It remains to prove (c). The equality $\Psef(X)=\Nef(X)$ is shown in \cite[Proposition 3.7]{DPS94}, this cone is simplicial since it is dual to the Mori cone $\NEb(X)$ which is simplicial by (a). Before establishing the second part of (c), let us first clarify that $Y_{I_j}$ has Picard number $\rho(Y_{I_j})=\rho-(\rho-1)=1$ by (b), and thus we can take an ample divisor $A_{Y_{I_j}}$ on $Y_{I_j}$\,, so that $\Pic(Y_{I_j})=\ZZ\cdot\scrO(A_{Y_{I_j}})$\,. Set 
\[
S:=\sum_{j=1}^\rho\RR_{\geqslant0}\cdot[\phi_{I_j}^\ast A_{Y_{I_j}}],
\]
which is a subcone of $\Nef(X)$. Since $\left\{[\phi_{I_j}^\ast A_{Y_{I_j}}]\right\}_{j}$ forms a basis of $N^1(X)_{\RR}:=N^1(X)\otimes_{\ZZ}\RR$\,, then for any nef divisor $D$ on $X$, we can write
\[
D\equiv \sum_{j=1}^\rho a_j\phi^\ast_{I_j}A_{Y_{I_j}}
\] 
and we find $a_j\geqslant 0$ from the nefness condition that $D\cdot C_j\geqslant0$\,. Hence $\Nef(X)=S$.  
\end{proof}

Keep the notations as in {\hyperref[prop_cone-CP-mfd]{Proposition \ref*{prop_cone-CP-mfd}}} and let $\RC^n(X)$ be the family of rational curves on $X$ (q.v.~\cite[\S II.2, pp.103-109]{Kollar96}). For each $i\in\{\,1,\dots,\rho\,\}$, we might assume that $\Gamma_i$ has the minimal $(-K_X)$-degree in $R_i$\,. Let $M_i$ be an irreducible component of $\RC^n(X)$ that contains $\Gamma_i$\,, let $\pi_i: U_i\to M_i$ be the universal family over $M_i$ and $e_i: U_i\to X$ the evaluation morphism. By construction, $U_i\subseteq M_i\times X$. 

\begin{prop}
\label{prop_fam-rc-CP-mfd}
Let everything as above and set $d_i:=-K_X\cdot\Gamma_i$\,. Then
\begin{itemize}
\item[{\rm(a)}] $M_i$ is a smooth projective variety of dimension $\dim X+d_i-3$.
\item[{\rm(b)}] $e_i$ is a smooth fibration and $\pi_i$ is a smooth $\PP^1$-fibration. In particular, for any $x\in X$, $(U_i)_x$ is a smooth projective variety of dimension $d_i-2$.
\item[{\rm(c)}] We have a short exact sequence 
\[
0\to T_{U_i/M_i}\to e_i^\ast T_X\to N_{U_i/M_i\times X}\to 0\,,
\]
here $N_{U_i/M_i\times X}$ denotes the normal bundle of $U_i$ in $M_i\times X$\,. 
\item[\rm(d)] We have a short exact sequence
\[
0\to T_{U_i/X}\to\pi_i^\ast T_{M_i}\to N_{U_i/M_i\times X}\to 0.
\]
For any $x\in X$, $\pi_i|_{(U_i)_x}$ equals to the restriction of the inclusion $U_i\hookrightarrow M_i\times X$ and make $(U_i)_x$ as a subvariety of $M_i$, and the exact sequence above gives rise to an isomorphism $N_{U_i/M_i\times X}|_{(U_i)_x}\simeq N_{(U_i)_x/M_i}$.
\end{itemize}
\end{prop}

\begin{proof}
(a) and (b) are essentially \cite[Proposition 2.10]{MOSCWW15}. We provide here a proof for the convenience of the readers. First, by \cite[II.2.12, p.105]{Kollar96} we see that $\pi_i$ is a smooth $\PP^1$-fibration (cf.~\cite[II.2.5, p.105]{Kollar96}). By the property of smooth morphisms (q.v.~\cite[Proposition (17.7.7), pp.74-75]{EGA4-4}),
the smoothness of $e_i$ follows from \cite[II.2.15, p.111]{Kollar96} and \cite[II.3.5.3, p.115]{Kollar96}; by the same argument we show the smoothness of $U_i$ and of $M_i$. By considering the Stein factorization of $e_i$ we get an étale cover of $X$, which is a fortiori trivial since $X$ is Fano (hence simply connected), consequently $e_i$ has connected fibres, and (b) is thus established. To show the projectivity of $M_i$\,, we take an ample divisor $A_0$ on $X$, and for every $m\in\ZZ_{\geqslant 0}$ set 
\[
A_m:=A_0+m\cdot\phi_i^\ast A_{Y_i}
\]
where $\phi_i: X\to Y_i$ is the contraction of $R_i$ and $A_{Y_i}$ an ample divisor on $Y_i$\,. Then $A_m$ is still ample, and $\Gamma_i$ has minimal $A_m$-degree when $m$ is sufficiently large, hence by \cite[II.2.14, p.110]{Kollar96} $M_i$ is projective. To establish (a) it remains to calculate the dimension of $M_i$\,, this can be done by using \cite[II.1.7.1, p.95]{Kollar96} and \cite[II.2.15, p.111]{Kollar96}. As for (c), it suffices to apply the Snake lemma to the following commutative diagram (noting that $T_{M_i\times X}\simeq \pi_i^\ast T_{M_i}\oplus e_i^\ast T_X$):
\begin{center}
\begin{tikzpicture}[scale=2.0]
\node (A0) at (-0.8,0) {$0$};
\node (A1) at (0,0) {$T_{U_i}$};
\node (A2) at (1.5,0) {$\pi_i^\ast T_{M_i}\oplus e_i^\ast T_X$};
\node (A3) at (3,0) {$N_{U_i/M_i\times X}$};
\node (A) at (4,0) {$0$};
\node (B0) at (-0.8,-1) {$0$};
\node (B1) at (0,-1) {$\pi_i^\ast T_{M_i}$};
\node (B2) at (1.5,-1) {$\pi_i^\ast T_{M_i}$};
\node (B3) at (3,-1) {$0$,};
\path[->,font=\scriptsize,>=angle 90]
(A0) edge (A1)
(A1) edge (A2)
(A2) edge (A3)
(A3) edge (A)
(B0) edge (B1)
(B1) edge (B2)
(B2) edge (B3)
(A1) edge (B1)
(A2) edge (B2)
(A3) edge (B3);
\end{tikzpicture}  
\end{center}
where the rows of the diagram are exact. (d) can be proved by the same argument as for (c).
\end{proof}

\subsection{Numerically flat vector bundles}
\label{ss_pre_num-flat}
Throughout this subsection, let $X$ be a compact K\"ahler manifold or projective variety and $E$ a vector bundle on $X$. Recall that $E$ is called \emph{numerically flat} if both $E$ and $E^\ast$ are nef, or equivalently, if $E$ is nef and $c_1(E)=0$ (q.v.~\cite[Definition 1.17]{DPS94}). Clearly if $E$ is Hermitian flat, then $E$ is numerically flat; conversely, a numerically flat vector bundle is successive extension of Hermitian flat vector bundles (\cite[Theorem 1.18]{DPS94}). The following simple result is of course known to experts, but due to lack of reference, we provide a proof here for the convenience of the readers.
\begin{lemma}
\label{lemma_h0-num-flat}
Let $E$ be a numerically flat vector bundle of rank $r$ on $X$. Then $\dimcoh^0(X,E)\leqslant r$.  
\end{lemma}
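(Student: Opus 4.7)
The plan is to bootstrap from the Hermitian flat case to the general numerically flat case via the filtration theorem of Demailly–Peternell–Schneider (\cite[Theorem 1.18]{DPS94}): any numerically flat vector bundle $E$ on a compact Kähler manifold admits a filtration
\[
0=E_0\subset E_1\subset\cdots\subset E_k=E
\]
by subbundles with Hermitian flat quotients $Q_i=E_i/E_{i-1}$.

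The core case to handle is that of a Hermitian flat bundle $F$ of rank $s$. Fixing a flat Hermitian metric $h$ on $F$, I would observe that for any global holomorphic section $\sigma\in H^0(X,F)$ the function $|\sigma|_h^2$ is plurisubharmonic: locally, a flat Hermitian trivialisation turns $\sigma$ into a tuple of holomorphic functions, so $|\sigma|_h^2$ equals the sum of their squared moduli. By compactness of $X$, $|\sigma|_h^2$ attains its maximum and is therefore constant, which, combined with holomorphicity and flatness of $h$, forces $\sigma$ to be parallel for the Chern connection of $(F,h)$. Parallel sections are determined by their value at a single point and are preserved by the monodromy representation $\rho\colon\pi_1(X)\to U(s)$ associated with $F$; consequently $H^0(X,F)$ injects into the space of $\rho$-invariant vectors in $\CC^s$, which has dimension at most $s$.

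To pass from Hermitian flat to numerically flat I would proceed by induction on the length $k$ of the filtration. For the step $k\rightsquigarrow k+1$, the short exact sequence
\[
0\to E_{k-1}\to E_k\to Q_k\to 0
\]
induces the left exact sequence
\[
0\to H^0(X,E_{k-1})\to H^0(X,E_k)\to H^0(X,Q_k),
\]
so that $\dimcoh^0(X,E_k)\leqslant\dimcoh^0(X,E_{k-1})+\dimcoh^0(X,Q_k)$. The induction hypothesis gives the first term at most $\rank E_{k-1}$, and the Hermitian flat case gives the second at most $\rank Q_k$; the sum equals $\rank E_k$, closing the induction.

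The only non-routine input is the filtration theorem of \cite{DPS94}, which is taken as a black box; once this is invoked, the argument is essentially formal and there is no real obstacle. The one subtlety to state cleanly is the reduction to the connected case (needed for the maximum principle on $|\sigma|_h^2$), which is harmless here since $X$ is assumed to be a compact Kähler manifold or a projective variety, both of which we may, and will, assume to be connected without loss of generality.
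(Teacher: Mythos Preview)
Your proof is correct, but it takes a different route from the paper's. The paper argues by induction on the rank $r$ directly: given a non-zero section $\sigma\in H^0(X,E)$, it uses \cite[Proposition 1.16 and Theorem 1.20]{DPS94} to conclude that $\sigma$ is nowhere vanishing, so that $\scrO_X$ sits inside $E$ as a sub\emph{bundle}; the quotient $E':=E/\scrO_X$ is then again numerically flat of rank $r-1$, and the left-exact sequence $0\to H^0(X,\scrO_X)\to H^0(X,E)\to H^0(X,E')$ together with the induction hypothesis finishes the argument. Your approach instead invokes the full structure theorem \cite[Theorem 1.18]{DPS94} to reduce to the Hermitian flat case, which you then settle by the maximum-principle/parallel-section argument identifying $H^0$ with the monodromy invariants. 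The paper's version is a little lighter on input (it avoids the filtration theorem), whereas yours gives a more transparent geometric description in the Hermitian flat case; in both cases the inductive step is the same cohomology exact sequence.
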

\begin{proof}

By induction on $r$. The proof is similar to that of \cite[Lemma 4.3.3]{Cao13}. The line bundle case follows from \cite[Proposition 1.16]{DPS94}. For the general case, if $\dimcoh^0(X,E)=0$, then there is nothing to prove. Otherwise, take a non-zero section $\sigma\in\Coh^0(X,E)$ which induces an inclusion $\scrO_X\hookrightarrow E$ which we still denote by $\sigma$, since $E^\ast$ is nef, by \cite[Proposition 1.16 and Theorem 1.20]{DPS94} we see that $\scrO_X$ is a subbundle of $E$. Set $E':=\Coker(\sigma)$, then $E'$ is a numerically flat vector bundle of rank $r-1$, and our lemma follows from the induction hypothesis. 
\end{proof}

See \cite[Proposition 2.8]{CCM21} for a related result.



\subsection{Asymptotic order of vanishing}
\label{ss_pre_AOV}
In this subsection we recall the definition and basic properties of the asymptotic order of vanishing of a Cartier divisor along a subvariety. 

Let $X$ be a normal variety and $Z$ an irreducible subvariety of $X$ that is not contained in $X_{\sing}$. For an effective Cartier divisor $D$ on $X$, the \emph{multiplicity of $D$ along $Z$}, denoted by $\mult_Z(D)$, is defined to be the order of vanishing of $D$ at a general point of $Z$, or equivalently, $v(D)$ where $v$ is the (unique) divisorial valuation whose centre is $Z$ (q.v.~\cite[Definition 2.9]{ELMNP06}); and it is equal to the generic Lelong number of the (closed positive) current $[D]$ along $Z$ (q.v.~\cite[(2.8) and (2.17), pp.19-23]{Dem10}). 

In the sequel, suppose that $X$ is projective. Given a linear system $|V|\subseteq|D|$, the multiplicity of $|V|$ along $Z$, denoted by $\mult_Z|V|$, is defined to be the multiplicity along $Z$ of a general divisor in $|V|$ (cf.~\cite[Vol.I, \S 2.3.C, Definition 2.3.11, p.165]{Laz04}). Similarly, it is equal to the generic Lelong number of the curvature current of the Hermitian metric on $\scrO_X(D)$ defined by $|V|$. 

Now suppose that $D$ is a big ($\QQ$-Cartier) $\QQ$-divisor, then the asymptotic order of vanishing of $D$ along $Z$ is defined to be
\[
\sigma_Z(D):=\lim_{m\to+\infty}\frac{\mult_Z(|mD|)}{m}.
\]
Here the limit exists by \cite[\S III.1.a, pp.79-81]{Nak04} or \cite[Definition 2.2]{ELMNP06}, cf. \cite[\S 2.2(1)]{Tak08}. And one can extend $\sigma_Z$ to a function defined on the whole of $\Psef(X)$ (q.v.~\cite[\S III.1.a, pp.79-85]{Nak04}), and even for analytic pseudoeffective $(1,1)$-classes (q.v.~\cite{Bou04}). The asymptotic order of vanishing has the following properties (q.v.~\cite[Lemma 2.4]{Tak08}):

\begin{prop}
\label{prop_AOV}
Let $X$ be a normal projective variety and $Z$ an irreducible subvariety of $X$ that is not contained in $X_{\sing}$. For pseudoeffective divisors $D$ and $D'$ on $X$, we have
\begin{itemize}
\item[(a)] $\sigma_Z(mD)=m\cdot\sigma_Z(D)$ for any $m\in\ZZ_{>0}$, i.e. $\sigma_Z$ is homogeneous;
\item[(b)] $\sigma_Z(D+D')\leqslant\sigma_Z(D)+\sigma_Z(D')$, i.e. $\sigma_Z$ is subadditive;
\item[(c)] $Z$ is contained in $\BB_-(D)$ if and only if $\sigma_Z(D)>0$.  
\end{itemize}
\end{prop}

\subsection{Multiplier ideals and positivity of direct images}
\label{ss_pre_multiplier-ideals}
In this subsection let us recall the basic definition and some results concerning the multiplier ideals.Throughout this subsection let $X$ be a complex manifold and $L$ a holomorphic line bundle over $X$. For a singular Hermitian metric $h$ on $L$ with semipositive curvature current, its multiplier ideal $\scrJ(h)\subseteq\scrO_X$ is defined as the ideal sheaf generated by germs of holomorphic functions $f$ such that $|f|^2e^{-2\phi}$ is locally integrable where $\phi$ is the local weight of $h$ (q.v.~\cite[Definition (5.4), p.40]{Dem10}). In particular, if $h$ is induced by a linear series $|V|\subseteq|L|$, then we simply write $\scrJ(|V|)$ for $\scrJ(h)$, cf.~\cite[Vol.II, \S 9.3.D, pp.176-177]{Laz04}. 

Multiplier ideals are related to geometric problems via the following result due to Henri Skoda (q.v.~\cite[Lemma (5.6), p.40]{Dem10} and \cite[Vol.II, \S 9.3.A, pp.162-164]{Laz04}):
\begin{prop}
\label{prop_multiplier-multiplicity}
Let $Z$ be a closed subvariety of $X$ of codimension $c$, if the Lelong number of $\phi$ along $Z$ is greater or equal to $c+s-1$ for some $s\in\ZZ_{>0}$, then $\scrJ(h)\subseteq\scrI_Z^{\langle s\rangle}$.
\end{prop}
In the proposition above, the \emph{$s$-th symbolic power of $\scrI_Z$} is defined as the ideal sheaf consisting of germs of holomorphic functions whose vanishing order along $Z$ is greater or equal to $s$ (q.v.~\cite[Vol.II, \S 9.3.A, Definition 9.3.4, p.164]{Laz04}). In addition, if $h$ is induced by a linear series $|V|\subseteq|L|$, then the condition on the Lelong number can translated into the condition on the multiplicity $\mult_Z|V|\geqslant c+s-1$.

Multiplier ideals are powerful tools in the study of complex geometry and algebraic geometry, thanks to the powerful vanishing theorems of Nadel and $L^2$-extension theorems of Ohsawa-Takegoshi associated to them. Since these topics are already treated abundantly in the literatures, 
we will not repeat them here, except the following one, 
which is a special case of \cite[Theorem 1.7]{Mat22} (see also \cite[Theorem 1.2]{Mat18}) and will be used in the proof of our main theorem. 

\begin{prop}[Nadel vanishing in the relative setting for a linear series]
\label{prop_rel-Nadel-vanishing}
Let $X$ be projective and $f:X\to Y$ a surjective morphism. Let $L$ and $D$ be integral divisors on $X$ and $c\in\QQ_{>0}$ such that $L-c\cdot D$ is big and nef for $f$. Then for any linear series $|V|\subseteq|D|$ and every $i>0$ we have
\[
\RDer^i\!f_\ast\left(\scrO_X(K_X+L)\otimes\scrJ(c\cdot|V|)\right)=0
\]
\end{prop}

Moreover, multiplier ideals appear in the statement of the positivity results on the direct images of twisted relative canonical bundles. Such positivity results originate from Philip A. Griffiths's work \cite{Grif70} on the semipositivity of the Hodge metric. In order to treat more general algebro-geometric problem (such as the Iitaka conjecture $C_{n,m}$), it is natural to consider the $L^2$-metric on the direct image of the relative canonical bundle twisted by a line bundle $L$ equipped with a singular Hermitian metric $h$. If the fibration is smooth, one deduces the positivity result follows from the log-plurisubharmonicity of the Bergman kernel function (similar ideas has implicitly appear in \cite[Lemma 7]{Kaw82}, see also \cite[Lemma 3.2]{Tsu07}); this is done in Berndtsson's work \cite{Ber09} for smooth $h$, and in Berndtsson-P{\u a}un's work \cite{BP08} for general singular $h$. See P{\u a}un's survey \cite{Pau16} for the definition of Griffiths (semi)positivity for singular metrics on vector bundles and torsion free sheaves. For non-necessarily smooth fibrations, P{\u a}un-Takayama use the semistable reduction techniques to obtain a general positivity result (q.v.~\cite{PT18}). However, these results have the drawback that essentially they cannot treat the direct image of the relative canonical bundle twisted by $\scrJ(h)$; note that if $h$ is singular this direct image is the more natural object to consider. In order to overcome this difficulty,  Guan-Zhou observe in \cite[\S 3.5, pp.~1149-1151]{GZ15} and their successive work \cite[Proposition 1.10]{GZ17} that the log-plurisubharmonicity of the Bergman kernel function, as well as the positivity of the direct image, can be deduced from the $L^2$-extension theorem with optimal estimate; the crucial point here is the optimal constant and this deduction can also be regarded as the geometric meaning of the optimal extension. Guan-Zhou's idea is carried out in the successive works \cite{HPS18,DWZZ24,Wang21} to obtain more general positivity results. By these works we have the following theorem:
\begin{thm}[{\cite[Theorem 2.6]{Wang21}}]
\label{thm_pos-direct-image}
Let $f:X\to Y$ be a proper morphism between K\"ahler manifolds and let $(L,h)$ be a line bundle on $X$ equipped with a singular Hermitian metric with semipositive curvature current. Then the $L^2$-metric on $f_\ast(K_X\otimes L\otimes\scrJ(h))$ is Griffiths semipositive and $L^2$-reflexive.
\end{thm}
See \cite[Definition 1.12]{Wang21} for the definition of \emph{$L^2$-reflexivity}; briefly speaking, this means that a local section defined out of an analytic subspace $Z$ of codimension $\geqslant 2$ that is $L^2$-integrable with respect to $h$ can be extended across $Z$. Be careful that in the cited paper this is called `$L^2$-extension property', but in order to avoid possible confusion with \cite[Definition 20.1]{HPS18} we change the terminology here. Of course a Griffiths semipositive singular Hermitian metric on a reflexive sheaf is automatically $L^2$-reflexive. The $L^2$-reflexivity is crucial in the following result:
\begin{prop}[{\cite[Theorem 1.13]{Wang21}}]
\label{prop_pos+det=0}
Let $(\scrF,h)$ be a torsion free sheaf equipped with a singular Hermitian metric that is Griffiths semipositive, if $(\scrF,h)$ is $L^2$-reflexive and $c_1(\det\scrF)=0$, then $(\scrF,h)$ is a Hermitian flat vector bundle. 
\end{prop}
Note that without the $L^2$-reflexivity condition the proposition above does not hold, e.g. an ideal sheaf of a subvariety of codimension $\geqslant 2$ equipped with the trivial metric (q.v.~\cite[Remark after Theorem 1.13]{Wang21}). See \cite[Theorem 3.4]{HIM22} and \cite[Corollary]{Wu22} for related results.
\section{General setup}
\label{sec_setup}
From now on, let $X$ be a CP-manifold, that is, a smooth Fano variety with nef tangent bundle $T_X$. 
By virtue of \cite[Theorem 3.1]{CP91} and \cite[Theorem 1.2]{OSCWW17}, we make the following 
\begin{ass}
\label{assumption_dim-FT}
$X$ has dimension $\geqslant 3$ and $X$ is NOT of flag type.
\end{ass}
Here a smooth Fano variety is called \emph{of flag type} or a \emph{flag-type manifold} if every elementary contraction of it is a $\PP^1$-fibration (q.v~\cite[Definition 1]{MOSCW15}). 

Put $V:=\PP E$ with $E:=T_X\oplus\scrO_X$ and for every $r\in\ZZ_{>0}$ put
\[
V_r:=\underbrace{V\underset{X}{\times}\cdots\underset{X}{\times}V}_{r\text{ times}},
\]
and let $q_r:V_r\to X$ be the natural morphism. Set $W:=V_{n+1}$ and $q:=q_{n+1}$, where $n:=\dim X$. Then we have $\dim V=2n$ and $\dim W=n^2+2n$. And let $p:V\to X$ be the natural morphism and $\pr_i:W\to V$ be the $i$-th projection, thus we have the following commutative diagram: 
\begin{center}
\begin{tikzpicture}[scale=2.5]
\node (A) at (0,0) {$X$.};
\node (B) at (0,1) {$V:=\PP(T_X\oplus\scrO_X)$};
\node (C) at (-2,0.8) {$W:=\underbrace{V\underset{X}{\times}\cdots\underset{X}{\times}V}_{(n+1)\text{ times}}$};
\path[->,font=\scriptsize,>=angle 90]
(-1.4,1) edge node[above]{$\pr_i$} (B)
(C) edge node[below left]{$q$} (A)
(B) edge node[right]{$p$} (A);
\end{tikzpicture}  
\end{center}
Notice that here we have $q=p\circ\pr_i$ for very $i=1,\cdots,n+1$. Moreover, for every $i\in\{1,\cdots,n+1\}$ we have the following Cartesian diagram
\begin{center}
\begin{tikzpicture}[scale=2.5]
\node (A) at (0,0) {$X$.};
\node (B) at (0,1) {$V$};
\node (C) at (-1,0) {$V_n$};
\node (D) at (-1,1) {$W$};
\node (S) at (-0.5,0.5) {$\square$};
\path[->,font=\scriptsize,>=angle 90]
(D) edge node[above]{$\pr_i$} (B)
(D) edge node[left]{$\pi_i$} (C)
(B) edge node[right]{$p$} (A)
(C) edge node[below]{$q_n$} (A);
\end{tikzpicture}  
\end{center}

Let $H$ be a tautological divisor on $V=\PP E$, i.e. so that $\scrO_V(H)\simeq\scrO_{\PP E}(1)$, and set $P:=-q^\ast K_X$. Since $T_X$ is nef and $-K_X$ is ample, $H$ is nef and $P$ is semiample. Put 
\[
L:=\sum_{i=1}^{n+1}\pr_i^\ast H,
\]
and for every $j\in\{1,2,\cdots,n+1\}$ put
\[
L_j:=\sum_{i\neq j}\pr_i^\ast H.
\]
Then $L$ and the $L_j$'s are nef divisors on $W$ and $L_j$ is a pullback of a divisor on $V_n$ via $\pi_j$\,.

\subsection{Formul{\ae} for canonical divisors} 
In this subsection we will calculate the canonical divisors of $V$ and $W$. The computation is standard. From the relative Euler sequence we find $K_{V/X}\sim -(n+1)H-p^\ast K_X$ and by a recursive calculation we get
\[
K_{W/X}\sim\sum_{i=1}^{n+1} \pr_i^\ast(-(n+1)H-p^\ast K_X)\sim -(n+1)L+(n+1)P. 
\]
In consequence we have 
\begin{equation}
\label{formulae_canonical}
K_V\sim -(n+1)H \quad\text{ and }\quad K_W\sim -(n+1)L+nP.
\end{equation}

\subsection{Viehweg's fibre product trick}
In this subsection we will prove the following positivity results:
\begin{prop}
\label{prop_L+-tP}
Let everything as above. Then
\begin{itemize}
\item[(a)] $L+tP$ is an ample $\QQ$-divisor for every $t\in\QQ_{>0}$\,; 
\item[(b)] $L_j-P\geqslant 0$ for every $j$ and $L-P\geqslant 0$;
\item[(c)] $L-tP$ is a big $\QQ$-divisor for every rational number $t<1+1/n$.
\end{itemize}
\end{prop}

\begin{proof}
First, a recursive computation gives the following identity for the direct images:
\begin{equation}
\label{formula_direct-im}
q_\ast\scrO_W(mL)\simeq (\Sym^mE)^{\otimes (n+1)}\quad\text{ and }\quad q_\ast\scrO_W(mL_j)\simeq (\Sym^mE)^{\otimes n}
\end{equation}
for every $m\in\ZZ_{>0}$.

To prove (a), just note that $W$ is can be regarded as a subvariety of $\PP(E^{\otimes(n+1)})$ via the relative Segre embedding, thus (a) follows from the observation that $E^{\otimes(n+1)}\langle-tK_X\rangle$ is an ample $\QQ$-twisted vector bundle (q.v.~\cite[Vol.II, \S 6.2.A, pp.~20-21 \& \S 6.2.B, Proposition 6.2.11, p.~24]{Laz04}) for every $t\in\QQ_{>0}$. 

As for (b), first note that the natural inclusion $\scrO_X(-K_X)\hookrightarrow T_X^{\otimes n}$ induces a non-zero section in $\Coh^0(X,T_X^{\otimes n}(K_X))$, but $T_X^{\otimes n}$ is direct summand of $E^{\otimes n}$, hence we get a non-zero section in
\[
\Coh^0(X,E^{\otimes n}(K_X))\simeq\Coh^0(W,\scrO_W(L_j-P)),
\]
here the isomorphism follows from the equality \eqref{formula_direct-im}. Similarly from the natural inclusion $\scrO_X(-K_X)\simeq\det E\hookrightarrow E^{\otimes (n+1)}$ we see that $\Coh^0(W,\scrO_W(L-P))\neq 0$.

Finally, (c) follows from (a) and Kodaira's Lemma (q.v.~\cite[Vol.I, \S 2.2.A, Corollary 2.2.7, p.~141]{Laz04}) by noting that 
\[
nL-(n+1)P=\sum_{j=1}^{n+1}L_j-P\geqslant 0.
\]
\end{proof}



\section{Codimension estimate of the augmented base locus of \texorpdfstring{$L$}{text}}
\label{sec_codim}
Let everything as in the {\hyperref[sec_setup]{\S \ref*{sec_setup}}}. From now on, we will concentrate on the study of the augmented base locus $\BB_+(L)$ of $L$.
Since $L+P$ is ample by {\hyperref[prop_L+-tP]{Proposition \ref*{prop_L+-tP}(a)}}, we have 
\[
\BB_+(L)=\BB(L-\epsilon P)=\Bs|\,k(L-\epsilon P)\,|_{\red}
\]
for some $\epsilon\in\QQ_{>0}$ sufficiently small and for any $k\in\ZZ_{>0}$ sufficiently large and divisible (q.v.~\cite[Vol.II, \S 10.3, Lemma 10.3.1, p.~247; Vol.I, \S 2.1.A, Proposition 2.1.21, p.~127]{Laz04}). In this section we prove:
\begin{prop}
\label{prop_codim-B+}
$\BB_+(L)$ has codimension at least $2$ in $W$.
\end{prop}

\begin{proof} 
Assume that $\codim\BB_+(L)=1$, and let $D$ be the divisorial part of $\BB_+(L)$, in other word, $D=\left(\Fix\left|\,k(L-\epsilon P)\,\right|\right)_{\red}$\,,
and $D$ can be regarded as a reduced subscheme of $W$. 

\paragraph{Step 1.} We show that $D\sim L-P$.




To this end, consider the short exact sequence:
\begin{equation}
\label{ses_L-P-D}
0\to \scrO_W(L-P-D)\to\scrO_W(L-P)\to\scrO_D(L-P)\to 0,
\end{equation}
which gives rise to the following exact sequence
\[
0\to\Coh^0(W,\scrO_W(L-P-D))\xrightarrow{\phi}\Coh^0(W,\scrO_W(L-P))\xrightarrow{\text{restr.}}\Coh^0(D,\scrO_D(L-P)).
\]
But by construction $D\subseteq\left(\Bs|k(L-\epsilon L)|\right)_{\red}\subseteq\Bs|L-P|$\,, hence the restriction morphism 
\[
\Coh^0(W,\scrO_W(L-P))\xrightarrow{\text{restr.}}\Coh^0(D,\scrO_D(L-P))
\]
is the zero map, and thus the inclusion morphism $\phi$ is an isomorphism. In particular, 
by {\hyperref[prop_L+-tP]{Proposition \ref*{prop_L+-tP}(b)}} we see that $L-P-D\geqslant 0$. Since $\BB_+(L)$ is invariant under the natural $\mathfrak{S}_n$-action on $W$, and so is $D$, we can write $D\sim aL-q^\ast D_X$ with $a\in\ZZ$ and $D_X$ a divisor on $X$.

First note that $0\leqslant a\leqslant 1$, indeed this can be shown by looking at the restriction of $L-P-D$ and $D$ to the general fibre of $q$. 
If $a=0$, then $D\sim -q^\ast D_X$, hence $-D_X\geqslant 0$, and hence $-D_X$ is semiample by {\hyperref[prop_cone-CP-mfd]{Proposition \ref*{prop_cone-CP-mfd}(c)}} and so is $D$, which is impossible since $D=\left(\Fix\left|\,k(L-\epsilon P)\,\right|\right)_{\red}$\,. 
Hence we must have $a=1$, and consequently $K_X+D_X\geqslant 0$. We can rewrite the isomorphism $\phi$ as following: 
\[
\Coh^0(X,\scrO_X(K_X+D_X))\xrightarrow{\simeq}\Coh^0(X,E^{\otimes(n+1)}(K_X)).
\]
On the other hand, apply $q_\ast$ to the short exact sequence \eqref{ses_L-P-D} we get an inclusion $\scrO_X(K_X+D_X)\hookrightarrow E^{\otimes (n+1)}(K_X)$. Since $\scrO_X$ is a direct summand of $T_X^{\otimes n}(K_X)$ thus of $E^{\otimes (n+1)}(K_X)$, we can consider the composite morphism 
\[
\scrO_X(K_X+D_X)\hookrightarrow E^{\otimes (n+1)}(K_X)\twoheadrightarrow\scrO_X\,,
\]
denoted by $\psi$, which gives rise to the following morphism between cohomology groups
\[
\Coh^0(X,\scrO_X(K_X+D_X))\xrightarrow[\simeq]{\phi}\Coh^0(X,E_X^{\otimes(n+1)}(K_X))\twoheadrightarrow\Coh^0(X,\scrO_X).
\]
The morphism above is non-zero, which implies that $\psi:\scrO_X(K_X+D_X)\to\scrO_X$ is a non-zero morphism, and thus $K_X+D_X\leqslant 0$. But we have seen that $K_X+D_X\geqslant 0$, hence $D_X\sim -K_X$ and $D\sim L-P$.

\paragraph{Step 2.} We show that $s_k(\Omega_X)=0$ for every $k>1$, here $s_k$ represents the $k$-th Segre class. 

We follow the definitions and notations in \cite[\S 3.1-3.2, pp.~47-51]{Ful84} for the Segre classes of a vector bundle. Note that the definition of Segre classes in \cite[\S 3.1, p.~47]{Ful84} is different from that in \cite[\S 2, p.~21]{DPS94} (cf. \cite[Vol.II, \S 8.3, Example 8.3.5, p.~118]{Laz04}) 

By Nakamaye's theorem on base loci \cite[Theorem 0.3]{Nak00} (see also \cite[\S 10.3, Theorem 10.3.5, p.~248]{Laz04} and \cite[Theorem 1.4]{Bir17}), we have 
\[
L^{n^2+2n-1}\cdot D=L^{n^2+2n-1}\cdot (L-P)=0.
\]
Since $L=\sum_{i=1}^{n+1}\pr_i^\ast H$ and since $H$ is nef, we have 
\begin{equation}
\label{eq_Nakamaye}
(\pr_1^\ast H)^{k_1}\cdot(\pr_2^\ast H)^{k_2}\cdot\cdots\cdot(\pr_{n+1}^\ast H)^{k_{n+1}}\cdot D=0,
\end{equation}
for any $k_1,\cdots, k_{n+1}\in\{0,1,\cdots, 2n-1\}$ satisfying $k_1+\cdots+k_{n+1}=n^2+2n-1$. 

From the definition of Segre classes and the fact that $s_k(E^\ast)=s_k(\Omega_X)$ (q.v.~\cite[Theorem 3.2(e)]{Ful84}), we have the following formul\ae{}:
\begin{align*}
(\pr_1^\ast H)^{n+a_1}\cdot(\pr_2^\ast H)^{n+a_2}\cdot\cdots\cdot(\pr_{n+1}^\ast H)^{n+a_{n+1}} &= s_{a_1}(\Omega_X)\cdot s_{a_2}(\Omega_X)\cdot\cdots\cdot s_{a_{n+1}}(\Omega_X), \\
(\pr_1^\ast H)^{n+b_1}\cdot(\pr_2^\ast H)^{n+b_2}\cdot\cdots\cdot(\pr_{n+1}^\ast H)^{n+b_{n+1}}\cdot P &= s_{b_1}(\Omega_X)\cdot s_{b_2}(\Omega_X)\cdot\cdots\cdot s_{b_{n+1}}(\Omega_X)\cdot(-K_X),
\end{align*}
where $a_1,\cdots,a_n\in\{0,1,\cdots, n\}$ satisfying $a_1+\cdots+a_n=n$ and $b_1,\cdots, b_n\in\{0,1,\cdots, n-1\}$ satisfying $b_1+\cdots+b_n=n-1$. For every $k\in\{1,\cdots, n-1\}$, set
\[
\gamma_k:=(\pr_1^\ast H)^{n+k}\cdot(\pr_2^\ast H)^{n+1}\cdot\cdots\cdot(\pr_{n-k}^\ast H)^{n+1}\cdot(\pr_{n-k+1}^\ast H)^n\cdot\cdots\cdot(\pr_n^\ast H)^n,
\]
regarded as an element in $\Chow^{n^2+2n-1}(W)$, then by the formula above we have
\begin{equation*}
\gamma_k\cdot(\pr_i^\ast H) = \left\{
\begin{array}{rl}
s_{k+1}(\Omega_X)\cdot(-K_X)^{n-k-1} &\text{if } i=1, \\
s_k(\Omega_X)\cdot s_2(\Omega_X)\cdot (-K_X)^{n-k-2} &\text{if } 2\leqslant i\leqslant n-k, \\
s_k(\Omega_X)\cdot (-K_X)^{n-k} &\text{if } n-k+1\leqslant i\leqslant n,
\end{array}
\right.
\end{equation*}
and 
\[
\gamma_k\cdot P=s_k(\Omega_X)\cdot(-K_X)^{n-k}.
\]
Combine the above formul\ae{} with \eqref{eq_Nakamaye} we have 
\begin{align*}
0 & =\gamma_k\cdot D=\gamma_k\cdot (\pr_1^\ast H+\cdots+\pr_n^\ast H-P) \\
& =s_{k+1}(\Omega_X)\cdot(-K_X)^{n-k-1}+(n-k-1)s_k(\Omega_X)\cdot s_2(\Omega_X)\cdot(-K_X)^{n-k-2} +(k-1)s_k(\Omega_X)\cdot(-K_X)^{n-k}.
\end{align*}
By \cite[Theorem 2.5 and the discussion thereafter]{DPS94}, every term on the last row of the above equation is non-negative, hence is vanishing, in particular we have 
\[
s_{k+1}(\Omega_X)\cdot(-K_X)^{n-k-1}=0
\]
for every $k\geqslant 1$. In other word, for every $k\in\{2,\cdots, n\}$, $s_k(\Omega_X)\cdot(-K_X)^{n-k}=0$. To conclude, it suffices to show that for any (reduced and irreducible) subvariety $Z$ of $X$ of dimension $k$, we have $s_k(\Omega_X)\cdot Z=0$. Since $-K_X$ is ample, there is $m\in\ZZ_{>0}$ such that $\scrO_X(-mK_X)\otimes\scrI_Z$ is globally generated, then $Z$ is a component of the complete intersection of $(n-k)$ general members of the linear system defined by $\Coh^0(X,\scrO_X(-mK_X)\otimes\scrI_Z)$, then by \cite[Theorem 2.5]{DPS94} we have $s_k(\Omega_X)\cdot Z=0$.

To end up this step, we will show that, as a consequence of the vanishing of the higher Segre classes, we have
\begin{equation}
\label{formula_Chern_TX}
c_k(T_X)=(-K_X)^k
\end{equation}
for every $k\in\ZZ_{>0}$. Indeed, by definition (q.v.~\cite[\S 3.2, p.~50]{Ful84}), for every $k\in\ZZ_{>0}$ we have 
\[
c_k(\Omega_X)+c_{k-1}(\Omega_X)\cdot s_1(\Omega_X)+\cdots+c_1(\Omega_X)\cdot s_{k-1}(\Omega_X)+s_k(\Omega_X)=0,
\]
thus 
\[
c_k(\Omega_X)=-s_1(\Omega_X)\cdot c_{k-1}(\Omega_X)=K_X\cdot c_{k-1}(\Omega_X). 
\]
Using the Chern class formula for dual bundles (q.v.~\cite[Remark 3.2.3(a), p.~54]{Ful84}) to rewrite the above equation we get
\[
c_k(T_X)=-K_X\cdot c_{k-1}(T_X),
\]
and the desired formula \eqref{formula_Chern_TX} follows.

\paragraph{Step 3.} We prove that $X$ is of flag type and conclude.  

Let everything as in {\hyperref[prop_fam-rc-CP-mfd]{Proposition \ref*{prop_fam-rc-CP-mfd}}}, we will show that the contraction of the extremal ray $R_i$ is a $\PP^1$-fibration for every $i$. In the sequel, for simplicity we omit the subscript $i$, and the situation is summarized in the following diagram
\begin{center}
\begin{tikzpicture}[scale=2.5]
\node (A) at (0,0) {$M$};
\node (B) at (0,1) {$U\subseteq M\times X$};
\node (C) at (1.2,1) {$X$.};
\path[->,font=\scriptsize,>=angle 90]
(B) edge node[left]{$\pi$} (A)
(B) edge node[above]{$e$} (C);
\end{tikzpicture}  
\end{center}

Set 
\[
\alpha:=-\frac{1}{d}e^\ast K_X,
\] 
then $\beta:=2\alpha+K_{U/M}$ is numerically trivial on each fibre of $\pi$, and since $\pi$ is a Fano fibration, by \cite[Theorem 3.7(4), p.76]{KM98} $\beta\sim_{\QQ}\pi^\ast\beta_M$ for some divisor $\beta_M$ on $M$. From the short exact sequence {\hyperref[prop_fam-rc-CP-mfd]{Proposition \ref*{prop_fam-rc-CP-mfd}(c)}}
\[
0\to T_{U/M}\to e^\ast T_X\to N_{U/M\times X}\to 0
\]
we find $\det N_{U/M\times X}\sim_{\QQ}(d-2)\alpha+\beta$; moreover, by \cite[Theorem 3.2(e)]{Ful84}, for any $k\in\ZZ_{>0}$ we have
\[
c_k(e^\ast T_X)=c_1(T_{U/M})\cdot c_{k-1}(N_{U/M\times X})+c_k(N_{U/M\times X}),
\]
by \eqref{formula_Chern_TX} this equality can be rewritten as 
\begin{equation}
\label{eq_whitey-sum}
c_k+c_{k-1}\cdot(2\alpha-\beta)=d^k\alpha^k\tag{$*_k$}
\end{equation}
where $c_k:=c_k(N_{U/M\times X})$. From $(*_{k+1})-d\alpha\cdot(*_k)$ we get
\[
(c_{k+1}-d\alpha\cdot c_k)=(c_k-d\alpha\cdot c_{k-1})\cdot(-2\alpha+\beta).
\]
Thus recursively we deduce that
\[
c_k-d\alpha\cdot c_{k-1}=(-2\alpha+\beta)^{k-1}\cdot (c_1-d\alpha),
\]
that is,
\[
c_k=d\alpha\cdot c_{k-1}+(-2\alpha+\beta)^k\,,
\]
and consequently
\begin{equation}
c_k=\sum_{i=0}^k (-2\alpha+\beta)^{k-i}\cdot(d\alpha)^i.
\end{equation}
Combine this with ({\hyperref[eq_whitey-sum]{$*_{n}$}}) we get 
\[
d^n\alpha^n=-c_{n-1}\cdot(-2\alpha+\beta)=-\sum_{i=1}^n(-2\alpha+\beta)^i\cdot(d\alpha)^{n-i},
\]
that is 
\begin{equation}
\label{eq_vanishing}
\sum_{i=0}^n (-2\alpha+\beta)^{n-i}\cdot(d\alpha)^{i}=0.
\end{equation}

Since $N_{U/M\times X}$ is a nef vector bundle, by \cite[Corollary 2.6]{DPS94} we have 
\begin{equation}
\label{ineq_nef-Segre2}
(c_1^2-c_2)\cdot S=d\alpha\cdot(-2\alpha+\beta)\cdot S\geqslant 0
\end{equation}
for every proper surface $S\subseteq U$. Let $H$ be any (irreducible) hypersurface in the base point free linear system $\left|-me^\ast K_X\right|$ for $m$ sufficiently large and divisible, then for any curve $C$ contained in $H$, set $S_C:=\pi\inv(\pi(C))$ and then by the inequality \eqref{ineq_nef-Segre2} we obtain
\[
md(-2\alpha+\beta)|_H\cdot C=md\alpha\cdot(-2\alpha+\beta)\cdot S_C\geqslant 0,
\]
which implies that $(-2\alpha+\beta)|_H$ is nef. From \eqref{eq_vanishing} we have
\[
\sum_{i=0}^n(-2\alpha+\beta)^{n-i}\cdot(d\alpha)^i\cdot H=0
\]
but since $(-2\alpha+\beta)|_H$ is nef we must have
\[
md^{i+1}(-2\alpha+\beta)^{n-i}\cdot \alpha^{i+1}=(-2\alpha+\beta)^{n-i}\cdot(d\alpha)^i\cdot H=0.
\]
In particular, when $i=n-1$, we get $(-2\alpha+\beta)\cdot\alpha^n=0$, which implies that $(-2\alpha+\beta)|_{U_x}\equiv 0$ for any $x\in X$, here $U_x$ denotes the fibre of $e$ over $x$, which is a smooth projective variety by {\hyperref[prop_fam-rc-CP-mfd]{Proposition \ref*{prop_fam-rc-CP-mfd}(b)}}. But $\alpha|_{U_x}\sim_{\QQ}0$, hence $\beta|_{U_x}\equiv0$ and in particular $\det N_{U/M\times X}|_{U_x}\equiv 0$. Since $N_{U/M\times X}$ is a nef vector bundle, $N_{U/M\times X}|_{U_x}\simeq N_{U_x/M}$ is numerically flat, where the isomorphism follows from {\hyperref[prop_fam-rc-CP-mfd]{Proposition \ref*{prop_fam-rc-CP-mfd}(d)}}. In particular, by {\hyperref[lemma_h0-num-flat]{Lemma \ref*{lemma_h0-num-flat}}} we have 
\begin{equation}
\label{ineq_num-flat}
\dimcoh^0(U_x\,,N_{U_x/M})\leqslant n-1.
\end{equation}

Now consider the Hilbert scheme of $M$. By {\hyperref[prop_fam-rc-CP-mfd]{Proposition \ref*{prop_fam-rc-CP-mfd}(d)}}, $e:U\to X$ is a family of subvarieties of $M$, hence there is a unique morphism $g: X\to\Hilb(M)$ such that $U$ equals to the pullback of the universal family via $g$. Let $T$ be a component of $\Hilb(M)$ that contains the image of $X$, and let $u: U_T\to T$ be the universal family over $T$, then we have the following commutative diagram
\begin{center}
\begin{tikzpicture}[scale=2.5]
\node (A) at (0,0) {$T$,};
\node (B) at (-1,0) {$X$};
\node (A1) at (0,1) {$U_T$};
\node (B1) at (-1,1) {$U$};
\node (C) at (1,1) {$M$};
\path[->,font=\scriptsize,>=angle 90]
(A1) edge node[right]{$u$} (A)
(B1) edge node[left]{$e$} (B)
(B) edge node[below]{$g$} (A)
(B1) edge node[below]{$g_U$} (A1)
(A1) edge node[below]{$h$} (C)
(B1) edge [bend left] node[above]{$\pi$} (C);
\end{tikzpicture}  
\end{center}
where $h:U_T\to M$ is the evaluation morphism, and the left square is Cartesian. By \cite[I.2.8, p.~31]{Kollar96} and \eqref{ineq_num-flat} we have $\dim T\leqslant n-1$; since $u$ is a flat fibration of relative dimension $d-2$ (cf. {\hyperref[prop_fam-rc-CP-mfd]{Proposition \ref*{prop_fam-rc-CP-mfd}(b)}}), we have $\dim U_T\leqslant n+d-3$. Since $\pi$ is surjective, so is $h$, but $\dim M=n+d-3\geqslant\dim U_T$, hence a fortiori $\dim T=n-1$, $\dim U_T=n+d-3$ and $h$ is generically finite; moreover, since $\pi$ is a fibration, so is $h$, hence $h$ must be a birational morphism. In particular, a general fibre of $g_U$ is isomorphic to $\PP^1$. By applying Chevalley's semicontinuity theorem (\cite[Lemma (3.1.1), p.188]{EGA4-3}) to $g_U$ we see that $h$ must be a finite morphism, thus an isomorphism by Zariski's Main Theorem (q.v.~\cite[Theorem 1.11, pp.9-10]{Uen75}). Hence $g$ is also a $\PP^1$-fibration, and it is at the same time the contraction of the extremal ray $R_i$ (see~{\hyperref[prop_cone-CP-mfd]{Proposition \ref*{prop_cone-CP-mfd}}}). In consequence, every elementary extremal contraction of $X$ is a $\PP^1$-fibration, thus $X$ is of flag type (see~{\hyperref[sec_setup]{\S \ref*{sec_setup}}}), contradicting to our {\hyperref[assumption_dim-FT]{Assumption \ref*{assumption_dim-FT}}}.  
\end{proof}

\section{Uniruledness of the augmented base locus of \texorpdfstring{$L$}{text}}
\label{sec_uniruled}

This section constitutes the key step towards the proof of {\hyperref[mainthm_DPP]{Theorem \ref*{mainthm_DPP}}}. Indeed we will prove the following result, which, roughly speaking, states that $\BB_+(L)$ is covered by rational curves intersecting $L$ trivially:

\begin{thm}
\label{thm_cover-rc}
Let everything as in {\hyperref[sec_setup]{\S \ref*{sec_setup}}}. Then for any general point $x$ in the augmented base locus $\BB_+(L)$ of $L$\,, there is a rational curves $\Gamma_x\subseteq\BB_+(L)$ passing through $x$ with $L\cdot\Gamma_x=0$\,.
\end{thm}

The initial idea to formulate this result is inspired by the following theorem of Caucher Birkar and the questions posed in the subsequent work \cite[\S 6]{Bir17}.

\begin{thm}[{\cite[Theorem 1.11]{Bir16}}]
\label{thm_Birkar}
Let $X$ be a normal projective variety of dimension $d$ over an algebraically closed field (of any characteristic), and let $B$ and $A$ be effective $\RR$-divisors. Suppose $A$ is nef and big and $L:=K_X+B+A$ is nef. If $L^d=0$, then for each general closed point $x\in X$, there is a rational curve $\Gamma_x$ passing through $x$ with $L\cdot \Gamma_x=0$\,.
\end{thm}

The subtle point here is that the uniruledness result does not hold in general for $\BB_+(L)$ if $L$ is big, as illustrated by the `counterexamples'  of \cite[Examples 6.2 and 6.3]{Bir17}. More specifically, these examples show that there is strong obstruction that prevents divisorial components of $\BB_+(L)$ from being covered by $L$-trivial rational curves; in consequence it is a crucial step (in {\hyperref[sec_codim]{\S \ref*{sec_codim}}}) to rule out the possibility that $\BB_+(L)$ has codimension $1$ in $W$ (see {\hyperref[prop_codim-B+]{Proposition \ref*{prop_codim-B+}}}).  

\paragraph{}
The rest of the section is devoted to the proof of {\hyperref[thm_cover-rc]{Theorem \ref*{thm_cover-rc}}}, which is inspired by \cite{BBP13}. First we set up some notations. By \cite[Vol.II, \S 10.3, Lemma 10.3.1, p.~247; Vol.I, \S 2.1.A, Proposition 2.1.21, p.~127]{Laz04}, we have
\[
\BB_+(L)=\BB(L-\epsilon P)=\Bs\left|k\mu_0(L-\epsilon P)\right|_{\red}
\]
for $\epsilon\in\QQ_{>0}$ sufficiently small, for some $m_0\in\ZZ_{>0}$ such that $\mu_0\epsilon\in\ZZ$ and for every $k\in\ZZ_{>0}$ sufficiently large. Now take $k_0$ sufficiently large so that $k_0\mu_0\epsilon\geqslant n$ and that the rational mapping $|B_0|:W\dashrightarrow \PP\Coh^0(W,\scrO_W(B_0))$ is birational to the image, where
\[
B_0:=k_0\mu_0(L-\epsilon P).
\]
Moreover, from the definition of restricted base loci we see that
\[
\BB_+(L)=\BB(L-\epsilon P)\supseteq\BB_-(L-\epsilon P)=\BB(L-\epsilon'P)=\BB_+(L),
\]
for any positive rational number $\epsilon'<\epsilon$, hence 
\begin{equation}
\label{eq_B=B-}
\BB_+(L)=\BB(L-\epsilon P)=\BB_-(L-\epsilon P).
\end{equation}
By {\hyperref[prop_codim-B+]{Proposition \ref*{prop_codim-B+}}}, 
\[
\codim\Bs|B_0|=\codim\BB_+(L)\geqslant 2,
\]
hence the linear system $|B_0|$ is mobile; moreover, 
$\dim W=n^2+2n\geqslant 3$, then by Bertini irreducibility theorem (q.v.~\cite[Theorem 10.1.14, p.170]{KLOS21}), a general member of $|B_0|$ is irreducible, and it is reduced since it is generically smooth (smooth outside $\Bs|B_0|$) by Bertini smoothness theorem (q.v.~\cite[Theorem 10.1.2, p.167]{KLOS21}). 

Let $S$ be an irreducible component of $\BB_+(L)=\Bs|B_0|$ and set $c:=\codim S$ ($c\geqslant 2$ by {\hyperref[prop_codim-B+]{Proposition \ref*{prop_codim-B+}}}). In the sequel we will prove that $S$ is covered by rational curves that intersect trivially with $L$\,.

\subsection{Study of the singularities of the \texorpdfstring{$\QQ$}{text}-linear system \texorpdfstring{$|B_0|_{\QQ}$}{text}}
\label{ss_sing-B0}
As a prelude to the proof of {\hyperref[thm_cover-rc]{Theorem \ref*{thm_cover-rc}}}, we study the singularities of the $\QQ$-linear system $|B_0|_{\QQ}$, and our attention is particularly paid to the asymptotic order of vanishing of $B_0$ along $S$. In order to understand its relation to {\hyperref[thm_cover-rc]{Theorem \ref*{thm_cover-rc}}}, we consider the following heuristic situation:

Take $B_1,\cdots, B_c$ be $c$ general members of $|B_0|$\,, and put
\[
B:=B_1+\cdots+B_c\,,
\]
and \textbf{suppose that $(W,B)$ is dlt at the generic point of $S$}.
In this case, $S$ is an irreducible component of $B_1\cap\cdots\cap B_c$\, and $(W,B)$ is SNC at the generic point of $S$, thus by definition (q.v.~\cite[Definition 4.15, p.163]{Kollar13}) $S$ is an lc centre of $(W,B)$ (cf.~\cite[Theorem 4.16, p.164]{Kollar13}). By subadjunction \cite[Theorem 4.19 and Complement 4.19.1, pp.166-167]{Kollar13}, there is an effective divisor $\Delta_{\bar S}$ on the normalization $\bar S$ of $S$, such that 
\[
(K_W+B)|_{\bar S}\sim_{\QQ} K_{\bar S}+\Delta_{\bar S}\,,
\]
here we use the restriction notation $|_{\bar S}$ to denote the pullback to the normalization $\bar S$ of $S$. Then put $A:=L+(ck_0\mu_0\epsilon-n)P$ and by \eqref{formulae_canonical} we have
\[
K_{\bar S}+\Delta_{\bar S}+A_{\bar S}\sim_{\QQ}(K_W+B+A)|_{\bar S}=(ck_0\mu_0-n)L_{\bar S}\,.
\]
By Nakamaye's theorem on base loci \cite[Theorem 0.3]{Nak00} (see also \cite[\S 10.3, Theorem 10.3.5, p.~248]{Laz04}) we have $L_{\bar S}^{n^2+2n-c}=L^{n^2+2n-c}\cdot S=0$, hence we can apply {\hyperref[thm_Birkar]{Theorem \ref*{thm_Birkar}}} to $(\bar S, \Delta_{\bar S}\,, A_{\bar S})$ to conclude in this case. We can also apply the argument of \cite[Proof of Theorem A]{BBP13} to cover this case.

Nevertheless, in general we cannot expect the singularities of $|B_0|$ along $S$ to be so mild. Indeed, as we take $k_0$ to be larger and larger, $|B_0|$ gets more and more singular along $S$ (e.g. the multiplicity gets larger and larger), whilst we do not have good control on $k_0$ (or $\mu_0$). In order to bypass this difficulty, a natural idea is to take a dlt modification of $(W,B)$ after suitably choosing the boundary divisor $B\geqslant 0$, and establish the uniruledness result for $S$ by running certain MMP as in \cite[Proof of Theorem A]{BBP13}. The subtle point here is: in order to make this argument to work, we should guarantee that the exceptional divisor on the dlt model that dominates $S$ is contracted by this MMP, and this requires a study of the singularities of $|B_0|_{\QQ}$ along $S$. More precisely, we need the following:

\begin{lemma}
\label{lemma_sing-B0}
Let everything as above. Then $b(n+1)\leqslant c$. 
\end{lemma}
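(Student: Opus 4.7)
I interpret $b$ as the asymptotic order of vanishing $b := \sigma_S(L - \epsilon P)$, equivalently $b = \sigma_S(B_0)/(k_0\mu_0)$ by homogeneity (Proposition~\ref{prop_AOV}(a)). Because $S \subseteq \BB_+(L) = \BB_-(L - \epsilon P)$ by \eqref{eq_B=B-}, Proposition~\ref{prop_AOV}(c) gives $b > 0$. The plan is to argue by contradiction: assume $(n+1)b > c$ and derive an inconsistency by combining Skoda's inequality, the relative Nadel vanishing, and the positivity of direct images with the $(n+1)$-fold product structure of $W$.

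First, I would translate $b$ into a non-trivial multiplier ideal. For $k$ sufficiently large, $\mult_S|kB_0| \geq (b - \eta)kk_0\mu_0$ for arbitrarily small $\eta > 0$. Setting $\lambda_k := (n+1)/(kk_0\mu_0)$, the product $\lambda_k \cdot \mult_S|kB_0|$ exceeds $(n+1)(b - \eta)$, which is strictly larger than $c$ for $\eta$ small. Proposition~\ref{prop_multiplier-multiplicity} then yields $\scrJ(\lambda_k \cdot |kB_0|) \subseteq \scrI_S$; sharper Skoda bounds refine this to $\scrI_S^{\langle s \rangle}$ for an integer $s \geq 1$ measuring the excess $(n+1)b - c$.

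Second, I would set up relative Nadel vanishing on a chosen projection $\pi_j: W \to V_n$. Take the integral line bundle $L' := (n+1)\pr_j^*H + A$ for a small ample divisor $A$ on $W$. Using \eqref{formulae_canonical} one computes
\[
L' - \lambda_k \cdot kB_0 \;=\; -(n+1)L_j + (n+1)\epsilon P + A,
\]
and since $L_j$ is a pullback from $V_n$ via $\pi_j$ and $P = -q^*K_X$ restricts to zero on the fibres of $\pi_j$ (which are copies of $V = \PP E$), the restriction of this divisor to a general fibre is precisely $A|_{\mathrm{fib}}$, which is ample. Hence $L' - \lambda_k \cdot kB_0$ is big and nef for $\pi_j$, and Proposition~\ref{prop_rel-Nadel-vanishing} gives
\[
\RDer^i\,\pi_{j*}\bigl(\scrO_W(K_W + L') \otimes \scrJ(\lambda_k |kB_0|)\bigr) \;=\; 0, \quad \forall\, i > 0.
\]

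The last step, which is the main obstacle, is to convert this vanishing into a contradiction using the positivity machinery of \S\ref{ss_pre_multiplier-ideals}. By Theorem~\ref{thm_pos-direct-image}, the direct image $\pi_{j*}\scrO_W(K_W + L')$ carries a Griffiths-semipositive $L^2$-reflexive metric. Running the construction across all $n+1$ projections $\pi_1, \dots, \pi_{n+1}$ and exploiting the $\mathfrak{S}_{n+1}$-symmetry of $W$, the combinatorial factor $(n+1)$ in the inequality emerges from the $(n+1)$-fold tensor decomposition $q_*\scrO_W(kB_0) \simeq (\Sym^{kk_0\mu_0}E)^{\otimes(n+1)}(kk_0\mu_0\epsilon K_X)$ from \eqref{formula_direct-im}, whose $(n+1)$-th symmetric factor must accommodate the multiplicity along $S$. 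The crux is to verify $L^2$-reflexivity of the subsheaf cut out by the multiplier ideal twist and to check that its determinant has trivial first Chern class, so that Proposition~\ref{prop_pos+det=0} forces Hermitian flatness; combined with Assumption~\ref{assumption_dim-FT} and Proposition~\ref{prop_cone-CP-mfd}, this flatness should contradict the nontriviality of the base locus structure and yield the desired bound $b(n+1) \leq c$.
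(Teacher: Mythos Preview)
There are genuine gaps. First, a misreading: $b(\cdot)$ is the function $\lambda\mapsto\lim_{k\to\infty}\sigma_S(kL-\lambda P)$ defined immediately before the lemma, so $b(n+1)$ is its value at $\lambda=n+1$, not a product $(n+1)\cdot\sigma_S(L-\epsilon P)$ as you take it.

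More seriously, you build the multiplier ideal from $|kB_0|$, a linear series involving the full $L$. The paper instead takes $\scrB:=|(n+1)L_1-(n+1)P|$, which is the $\pi_1$-pullback of a linear series on $V_n$. This choice is the crux of the argument: because $\scrB$ is a pullback, the subscheme $Z$ cut out by $\scrJ(\scrB)$ does not dominate $V_n$, so over a general $t\in V_n$ one has $\scrJ|_{W_t}=\scrO_{W_t}$, and this is exactly what drives the fibrewise dimension count in the paper's Step~1. With your $|kB_0|$ there is no such control, and your relative Nadel vanishing, while formally correct, yields no usable information about the fibres of $\pi_j$.

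Finally, your last paragraph is not a proof sketch but a wish list. You invoke Assumption~\ref{assumption_dim-FT} and Proposition~\ref{prop_cone-CP-mfd}, neither of which plays any role in this lemma. The paper's contradiction is much more concrete. The flatness argument (Theorem~\ref{thm_pos-direct-image} and Proposition~\ref{prop_pos+det=0}) is applied on the fibre $W_x\simeq(\PP^n)^{n+1}$ over $X$ to show that $\pr_{1\ast}(\scrO_{W_x}(mL_1)\otimes\scrJ(\scrB_x))$ and its quotient are trivial bundles on $V_x\simeq\PP^n$; this forces $\dimcoh^0(W_x,\scrO_{Z_x}(L+mL_1))=O(m^{n^2-1})$ and hence $\pi_{1\ast}(\scrO_W(L)\otimes\scrJ)\neq 0$. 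From this one extracts, for general $t\in\pi_1(S)$, a divisor $D|_{W_t}$ in $|\scrO_{W_t}(L)|\simeq|\scrO_{\PP^n}(1)|$ that vanishes to order $\geqslant 2$ along $S_t$ (since $\scrJ\subseteq\scrI_S^{\langle 2\rangle}$). But every member of $|\scrO_{\PP^n}(1)|$ is a smooth hyperplane, and that is the contradiction.
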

Before giving the proof of the lemma, let us first explain the definition of the function $b(\cdot)$: for any $\lambda\in\QQ_{>0}$, set 
\[
b(\lambda):=\lim_{k\to+\infty}\sigma_S(kL-\lambda P),
\]
here the limit exists since for a fixed $\lambda$, $\sigma_S(kL-\lambda P)$ is decreasing in $k$ by subaddivity of $\sigma_S$ ({\hyperref[prop_AOV]{Proposition \ref*{prop_AOV}(b)}}) and nefness of $L$. Of course, one can extend $b(\cdot)$ to a function defined on $\RR_{>0}$, but this is not needed in our paper. Moreover, we see that $b(\cdot)$ is a linear function: indeed, for $a\in\ZZ_{>0}$ by homogeneity of $\sigma_S$ ({\hyperref[prop_AOV]{Proposition \ref*{prop_AOV}(a)}}) we have
\[
b(a\lambda)=\lim_{k\to+\infty}\sigma_S(kL-a\lambda P)=\lim_{k'\to+\infty}\sigma_S(a(k'L-\lambda P))=a\cdot b(\lambda).
\]


\begin{proof}[{Proof of {\hyperref[lemma_sing-B0]{Lemma \ref*{lemma_sing-B0}}}}]
Otherwise if $b(n+1)>c$, then consider the linear series $\scrB:=|(n+1)L_1-(n+1)P|$ which is non-empty by {\hyperref[prop_L+-tP]{Proposition \ref*{prop_L+-tP}(b)}} and we have
\[
\mult_S\scrB\geqslant\sigma_S((n+1)L_1-(n+1)P)\geqslant\sigma_S((n+1)L-(n+1)P)\geqslant b(n+1)>c,
\]
thus $\mult_S\scrB\geqslant c+1$. Set $\scrJ:=\scrJ(\scrB)$ and let $Z:=Z(\scrJ)$ be the subscheme of $W$ defined by $\scrJ$, then by 
{\hyperref[prop_multiplier-multiplicity]{Proposition \ref*{prop_multiplier-multiplicity}}} we have $\scrJ\subseteq\scrI_S^{\langle 2\rangle}$. 
In the sequel we proceed in 2 steps to get a contradiction.

\paragraph{Step 1.} We show that $\pi_{1\ast}(\scrO_W(L)\otimes\scrJ)\neq 0$.

To this end, it suffices to show that for $m\in\ZZ_{>0}$ sufficiently large, $q_\ast(\scrO_W(L+mL_1)\otimes\scrJ)\neq 0$, which can be guaranteed by the non-vanishing of $\Coh^0(W_x\,,\scrO_{W_x}(L+mL_1)\otimes\scrJ(\scrB_x))$
for general $x\in X$, here we set $\scrB_x:=\scrB|_{W_x}\subseteq\left|(n+1)L_1|_{W_x}\right|$ and we use the fact that $\scrB_x$ is not empty and $\scrJ|_{W_x}=\scrJ(\scrB_x)$ for general $x\in X$ (q.v.~\cite[Vol.II, \S 9.5.D, Example 9.5.37(b), p.211]{Laz04}). Now consider the natural projection 
\[
\pr_1: W_x\simeq(\PP^n)^{\times(n+1)}\to V_x\simeq\PP^n.
\]
An easy calculation gives $K_{W_x/V_x}=-(n+1)L_1|_{W_x}$\,, thus by writing
\[
mL_1|_{W_x}=K_{W_x/V_x}+(n+1+m)L_1|_{W_x}\,,
\]
we get $\RDer^i\!\pr_{1\ast}(\scrO_{W_x}(mL_1)\otimes\scrJ(\scrB_x))=0$ for $i>0$ by the relative Nadel vanishing ({\hyperref[prop_rel-Nadel-vanishing]{Proposition \ref*{prop_rel-Nadel-vanishing}}}), and this implies that we have the following short exact sequence
\begin{equation}
\label{ses_scrJ_x}
0\to\scrF_m\to \scrO_{V_x}\otimes\Coh^0((\PP^n)^{\times n},\boxtimes_{i=1}^n\scrO_{\PP^n}(m))\to \scrG_m\to 0,
\end{equation}
where $\scrF_m:=\pr_{1\ast}(\scrO_{W_x}(mL_1)\otimes\scrJ(\scrB_x))$ and $\scrG_m:=\pr_{1\ast}(\scrO_{Z_x}(mL_1))$. From \eqref{ses_scrJ_x} we see that the torsion free part of $\scrG_m$ (that is, the quotient of $\scrG_m$ by its torsion subsheaf) admits a singular Hermitian metric that is Griffiths semipositive, in particular $\det\scrG_m$ is pseudoeffective; on the other hand, by {\hyperref[thm_pos-direct-image]{Theorem \ref*{thm_pos-direct-image}}} the natural $L^2$-metric on $\scrF_m$ is Griffiths semipositive and $L^2$-reflexive, which implies that $\det\scrF_m$ is pseudoeffective. But from \eqref{ses_scrJ_x} we have  
\[
\det\scrF_m\otimes\det\scrG_m\simeq\scrO_{V_x}\,,
\]
hence $\det\scrF_m\equiv\det\scrG_m\equiv0$. It follows that $\scrF_m$ (equipped with the natural $L^2$-metric) is a Hermitian flat vector bundle by 
{\hyperref[prop_pos+det=0]{Proposition \ref*{prop_pos+det=0}}}. Moreover, by \cite[Theorem 1.20 and Proposition 1.16]{DPS94} we see that $\scrF_m$ is a subbundle of the trivial vector bundle $\scrO_{V_x}\otimes\Coh^0((\PP^n)^{\times n},\boxtimes_{i=1}^n\scrO_{\PP^n}(m))$; in consequence $\scrG_m$ is locally free and thus a Hermitian flat vector bundle. Since $\PP^n$ is simply connected, flat vector bundles $\scrF_m$ and $\scrG_m$ are both trivial, hence 
\[
\dimcoh^0(W_x\,,\scrO_{Z_x}(L+mL_1))=\dimcoh^0(V_x\,,\scrG_{m+1}\otimes\scrO_{V_x}(H))=(n+1)\cdot\rank\scrG_{m+1}=O(m^{n^2-1}).
\]
By Nadel vanishing \cite[Vol.II, \S 9.4.B, Corollary 9.4.15, p.190]{Laz04} we have 
\[
\Coh^1(W_x\,,\scrO_{W_x}(L+mL_1)\otimes\scrJ(\scrB_x))=\Coh^1(W_x\,,\scrO_{W_x}(K_{W_x}+(n+2)L+mL_1)\otimes\scrJ(\scrB_x))=0
\]
hence for $m\gg1$ we obtain
\begin{align*}
\dimcoh^0(W_x\,,\scrO_{W_x}(L+mL_1)\otimes\scrJ(\scrB_x)) &=\dimcoh^0(W_x\,,\scrO_{W_x}(L+mL_1))-\dimcoh^0(W_x\,,\scrO_{Z_x}(L+mL_1)) \\
&=(n+1)\cdot\binom{m+n}{n}^{n}+O(m^{n^2-1})>0,
\end{align*}
and this implies that  $\pi_{1\ast}(\scrO_W(L)\otimes\scrJ)\neq 0$.

\paragraph{Step 2.} For general $t\in\pi_1(S)$ we show that $\Coh^0(W_t\,,\scrO_{W_t}(L)\otimes\scrI_{S_t}^{\langle 2\rangle})\neq 0$ and conclude.

For any $m\in\ZZ_{>0}$ set
\[
\scrH_m:=\pi_{1\ast}(\scrO_W(L+mL_1+mP)\otimes\scrJ)\,,
\]
then from {\bf Step 1} it follows that $\scrH_m\neq 0$. Since $L_1+P$ is the pullback of an ample divisor on $V_n$ via $\pi_1$, $\scrH_m$ is globally generated for $m\gg 1$. Since $\scrB=\left|(n+1)L_1-(n+1)P\right|$ is the pullback of a linear series on $V_n$ via $\pi_1$\,, $\pi_1|_Z:Z\to V_n$ is not dominant, in particular, for general $t\in V_n$\,, $\scrJ|_{W_t}=\scrO_{W_t}$ and thus
\begin{equation}
\label{eq_scrH-general-pt}
\scrH_m\otimes\kappa(t)\simeq\Coh^0(W_t\,,\scrO_{W_t}(L));
\end{equation}
moreover, since $\scrH_m$ is globally generated, $\pi_1|_{\Bs|\Lambda_m|}:\Bs|\Lambda_m|\to V_n$ is not dominant where the linear series
\[
\Lambda_m:=\Coh^0(W,\scrO_W(L+mL_1+mP)\otimes\scrJ).
\]
Now set $F:=\Fix|\Lambda_m|$, then $F=\pi_1^\ast F_0$ for some divisor $F_0\geqslant 0$ on $V_n$ since $\pi_1$ is a smooth fibration. Again since $L_1+P$ is the pullback of an ample divisor on $V_n$\,,  $F+m'(L_1+P)$ is globally generated for $m'\gg 1$\,, and thus $|\Lambda_{m+m'}|$ is mobile. In other word, up to enlarging $m$ we have that $\codim\Bs|\Lambda_m|\geqslant 2$. 

Again by \eqref{eq_scrH-general-pt} and global generation of $\scrH_m$ we see that the rational map defined by $\Lambda_m$ maps $W_t$ isomorphically to its image for general $t\in V_n$\,, in particular, the image of $|\Lambda_m|$ has dimension $\geqslant n\geqslant 2$. Hence by Bertini irreducibility theorem (q.v.~\cite[Theorem 10.1.14, p.170]{KLOS21}), a general divisor $D\in|\Lambda_m|$ is irreducible. \textcolor{red}{By looking at the restriction of $D$ to a general fibre of $\pi_1$ we find that $D$ is horizontal and thus equidimensional (q.v.~\cite[Lemma 1.6.2]{Wang-thesis}) over $V_n$\,, in particular $D$ does not contain any fibre of $\pi_1$} and thus for any $t\in V_n$\,, the restriction $D|_{W_t}\in\left|\scrO_{W_t}(L)\right|\simeq|\scrO_{\PP^n(1)}|$ is smooth (since every hyperplane in $\PP^n$ is smooth). On the other hand, we have already seen that $\scrJ\subseteq\scrI_S^{\langle 2\rangle}$\,, hence $D$ vanishes of order $\geqslant 2$ along $S$, and consequently for general $t\in\pi_1(S)$, $D|_{W_t}$ vanishes of order $\geqslant 2$ along every component of $S_t$\,, which contradicts the smoothness of $D|_{W_t}$ and the lemma thus is proved.
\end{proof}

\subsection{Proof of \texorpdfstring{{\hyperref[thm_cover-rc]{Theorem \ref*{thm_cover-rc}}}}{text}}
\label{ss_proof-cover-rc}
With preparations accomplished in {\hyperref[ss_sing-B0]{\S \ref*{ss_sing-B0}}} we are now at the point to give the:

\begin{proof}[Proof of {\hyperref[thm_cover-rc]{Theorem \ref*{thm_cover-rc}}}]

As pointed out above, the idea of the proof is to choose an appropriate boundary $B$ on $W$ and take a dlt modification of $(W,B)$, on which we run a certain MMP, as inspired by \cite[Proof of Theorem A]{BBP13}; the key point here is to prove that the exceptional divisor lying over $S$ is contracted by this MMP, and then we are done since the curves contracted by the MMP are $K$-negative, thus cannot be contracted by the dlt modification which is a relative minimal model. The proof proceeds in 4 steps.

\paragraph{Step 0.} We make some preparations.

Let $k_0$ sufficiently large such that $b(k_0\mu_0\epsilon)>1$ and take $B_1\,,\cdots,B_c$ be general members of $\left|\mu B_0\right|$ for $\mu$ sufficiently large such that for every $j$ we have 
\[
\frac{1}{\mu}\mult_S(B_j)\leqslant b(k_0\mu_0\epsilon)+\delta
\]
where $\delta$ is sufficient small (we will see later that it suffices to take $\delta<1/(4n+4)$). Here let us recall that the function $b$ is defined as follows:
\[
b(\cdot):\ZZ_{>0}\to \QQ_{>0}\,,\quad\lambda\mapsto\lim_{k\to+\infty}\sigma_S(kL-\lambda P).
\]
Now set
\[
B:=\frac{1}{\mu}\sum_{j=1}^c B_i\,,
\]
then $(W,B)$ is not lc at the generic point of $S$ since by \cite[Lemma 2.29, pp.52-53]{KM98} we have
\[
a(v_S\,,W,B)=c-1-\frac{1}{\mu}\sum_{j=1}^c\mult_S B_j\leqslant c-1-c\cdot b(k_0\mu_0\epsilon)<-1,
\]
where $v_S$ is the divisorial valuation centred at $S$ and $a(\;\cdot\;,W,B)$ denotes the discrepancy of $(W,B)$ at a divisorial valuation. 

Take a $\QQ$-factorial dlt model $f:(W',B')\to (W,B)$ of $(W,B)$ (q.v.~\cite[Theorem 1.34, p.28]{Kollar13}). Here, each $B_i$ is irreducible by Bertini, hence $B$ is a boundary (q.v.~\cite[Definition 1.3, p.6]{Kollar13}), i.e. every coefficient of $B$ is in $[0,1]$. By construction we have $B'=f_\ast^{-1}B+E$ with $E:=\Exc(f)$ and $K_{W'}+B'$ is $f$-nef. Write
\begin{equation}
\label{eq_log-discrepancy}
K_{W'}+B'\sim_{\QQ}f^\ast(K_W+B)+\sum_i a_i E_i\,,
\end{equation}
by construction $a_i=1+a(E_i,W,B)=\text{log-discrepancy of }(W,B)\text{ at }E_i$\,, and by negativity lemma (q.v~\cite[Lemma 3.39, pp.102-103]{KM98}) $a_i\leqslant 0$ for every $i$. Since $(W,B)$ is not dlt at the generic point of $S$, there is an irreducible component of $E$, say $E_1$, that dominates $S$. Again by \cite[Lemma 2.29, pp.52-53]{KM98} we get
\begin{equation}
\label{ineq_a_1}
a_1=1+(c-1)-\frac{1}{\mu}\sum_{j=1}^{c}\mult_S B_j\geqslant c-b(k_0\mu_0\epsilon c)-c\delta.
\end{equation}
Note that every lc centre of $(W',B')$ is normal by \cite[Theorem 4.16, p.164]{Kollar13} and in particular $E_1$ is normal. In the sequel we will prove that $E_1$ is covered by rational curves that intersect trivially with $f^\ast L$ and are not contracted by $f$. 

Set 
\[
A:=\frac{1}{2}(L+P),
\]
then $A$ is ample on $W$. Take $G\geqslant 0$ to be an $f$-exceptional $\QQ$-divisor such that $-G$ is $f$-ample. We choose $G$ sufficiently small so that $G\leqslant\delta\cdot E$ and $A':=f^\ast A-G$ is ample on $W'$, and we set
\[
A'_m:=\frac{1}{k}\cdot\text{general member of the linear system }\left|k(A'+mf^\ast L)\right|\,,
\]
where $m\in\ZZ_{>0}$ and $k$ sufficiently large and divisible. Then $(W',B'+A'_{m})$ is still a dlt pair, and we set 
\[
D_m:=K_{W'}+B'+A'_m\,.
\] 

\paragraph{Step 1.} We run a $D_m$-MMP with scaling of a fixed sufficiently ample divisor for $m\gg1$.

The existence and termination of such MMP is essentially guaranteed by {\hyperref[thm_lc-flip]{Theorem \ref*{thm_lc-flip}}}. Here the subtle point is that for $m$ sufficiently large, we can make this MMP independent of $m$ and $f^\ast L$-trivial. To this end, we proceed as follows.

Set $W_0:=W'$, by {\hyperref[thm_lc-cone]{Theorem \ref*{thm_lc-cone}}}, there are only finitely many $D_m$-negative extremal rays and each one is spanned by a rational curve that has intersection number $\leqslant 2(n^2+2n)$ with $-D_m$, then we see that there is $m_0\in\ZZ_{>0}$ such that for any $m\geqslant m_0$\,, $\NEb(W')_{D_m<0}$ is independent of $m$ and are spanned by finitely many extremal $D_m$-negative rays that intersect trivially with $f^\ast L$. Hence for any $m\geqslant m_0$\,, by {\hyperref[thm_lc-flip]{Theorem \ref*{thm_lc-flip}(a)}} we get the first step of this $D_m$-MMP, denoted by $g_1: W_0\dashrightarrow W_1$, which is independent of $m$ and is $f^\ast L$-trivial; and by \cite[Theorem 3.7(4), p.76]{KM98} there is a nef divisor on $L_{W_1}$ on $W_1$ such that $g_1^\ast L_{W_1}=f^\ast L$\,. By the same argument as above applied to $W_1$, up to enlarging $m_0$, for any $m\geqslant m_0$ we get the second step of this $D_m$-MMP $g_2:W_1\dashrightarrow W_2$\,, which is independent of $m$ and $L_{W_1}$-trivial, and a nef divisor $L_{W_2}$ on $W_2$ such that $g_2^\ast L_{W_2}=L_{W_1}$. Continue this process, we get a series of divisorial contractions or flips $g_i: W_{i-1}\dashrightarrow W_{i}$ that are independent of $m$ ($m\geqslant m_0$) and $L_{W_{i-1}}$-trivial, and this sequence must terminate by {\hyperref[thm_lc-flip]{Theorem \ref*{thm_lc-flip}(b)}}; in this way we obtain a $D_m$-MMP (with scaling of a sufficiently ample divisor), denoted by $g: W'\dashrightarrow W_l:=Z$\,, that is independent of $m$ ($m\geqslant m_0$) and $f^\ast L$-trivial. Moreover, we take an elimination of indeterminacies of $g$ as follows:
\begin{center}
\begin{tikzpicture}[scale=2.0]
\node (A) at (0,0) {$W$};
\node (B) at (0,1) {$W'$};
\node (C) at (2,1) {$Z$,};
\node (D) at (1,2) {$\tilde{Z}$};
\path[->,font=\scriptsize,>=angle 90]
(D) edge node[above left]{$\phi$} (B)
(D) edge node[above right]{$\tilde{g}$} (C)
(B) edge node[left]{$f$} (A);
\path[dashed,->,font=\scriptsize,>=angle 90]
(B) edge node[below]{$g$} (C);
\end{tikzpicture}  
\end{center}
then $g_\ast D_m:=\tilde g_\ast\phi^\ast D_m$ is nef ($g$ is a $D_m$-MMP) and there is a nef divisor $L_Z$ on $Z$ such that $\phi^\ast f^\ast L\sim\tilde{g}^\ast L_{Z}$\,. 

\paragraph{Step 2.} We prove that $E_1$ is contracted by $g$.




The key point of this step is to show that $\sigma_{E_1}(D_m)>0$. Since 
\[
\sigma_{E_1}(\sum_{i\neq 1}(-a_i)E_i+(G-\mult_{E_1}G\cdot E_1))=0, 
\]
by \eqref{eq_log-discrepancy} and subadditivity of $\sigma_{E_1}$ we get
\[
\sigma_{E_1}(D_m)\geqslant\sigma_{E_1}(f^\ast(K_W+B+A+mL)-(-a_1+\mult_{E_1}G)\cdot E_1).
\]
On the hand, by {\hyperref[lemma_sing-B0]{Lemma \ref*{lemma_sing-B0}}} and by linearity of the function $b(\cdot)$ we get:
\begin{align*}
\sigma_{E_1}(f^\ast(K_W+B+A+mL)) &=\sigma_S(K_W+B+A+mL) \\
&=\sigma_S((m+k_0\mu_0c-n-\frac{1}{2})L-(k_0\mu_0\epsilon c-n-\frac{1}{2})P)\\
&\geqslant b(k_0\mu_0\epsilon c-n-\frac{1}{2}) \\
&=b(k_0\mu_0\epsilon c)-b(n+\frac{1}{2})\\
&\geqslant b(k_0\mu_0\epsilon c)-\frac{n+\frac{1}{2}}{n+1}\cdot c \\
&=b(k_0\mu_0\epsilon c)-c+\frac{c}{2n+2}\\
&> -a_1+\mult_{E_1}G,
\end{align*}
where the last inequality follows from \eqref{ineq_a_1} if we take e.g. $\delta<1/(4n+4)$ (noting that in our construction we set $G\leqslant\delta\cdot E$). Then by \cite[III.1.8, p.84]{Nak04} (though \cite{Nak04} assumes that $X$ is smooth, but it is easy to see that all the results in \S III.1 holds for $X$ normal) we have
\begin{align*}
\sigma_{E_1}(D_m) &\geqslant\sigma_{E_1}(f^\ast(K_W+B+A+mL)-(-a_1+\mult_{E_1}G)\cdot E_1)) \\
&=\sigma_{E_1}(f^\ast(K_W+B+A+mL))-(-a_1+\mult_{E_1}G)>0.
\end{align*}

Now let us prove that $E_1$ is contracted by $g$. Otherwise, if $E_1$ is not contracted by $g$, then consider $\Delta:=\phi^\ast D_m-\tilde g^\ast D_{m,Z}$, here we use the notation with subscript to denote the strict transform of a divisor on $W'$ to a birational model, e.g. $D_{m,Z}$ stands for $g_\ast D_m$. By \cite[Lemma 3.38, p.102]{KM98} $\Delta\geqslant0$ and since $E_1$ is not contracted by $g$, we see that $E_{1,\tilde Z}\not\subseteq\Supp(\Delta)$ and thus $\sigma_{E_{1,\tilde Z}}(\Delta)=0$. Then by nefness of $D_{m,Z}=g_\ast D_m$ and subadditivity of $\sigma_{E_{1,\tilde Z}}$ we get
\begin{align*}
0=\sigma_{E_{1,Z}}(D_{m,Z}) &=\sigma_{E_{1,\tilde Z}}(\tilde g^\ast D_{m,Z})\geqslant\sigma_{E_{1,\tilde Z}}(\tilde g^\ast D_{m,Z}+\Delta) \\
&=\sigma_{E_{1,\tilde Z}}(\phi^\ast D_m)=\sigma_{E_1}(D_m)>0,
\end{align*}
which is absurd, hence a fortiori $E_1$ is contracted by the $D_m$-MMP $g$. In particular, $E_1$ is covered by rational curves intersecting trivially with $f^\ast L$.

\paragraph{Step 3.} We conclude by showing that the $f^\ast L$-trivial curves constructed by Step 2 are not contracted by $f$.

Let $\lambda$ be the index such that the strict transform of $E_1$ is contracted by $g_{\lambda}:W_{\lambda-1}\to W_{\lambda}$ ($g_\lambda$ is a divisorial contraction, in particular it is everywhere defined) but not by $g_{\lambda-1}\circ\cdots\circ g_1:W_0\dashrightarrow W_{\lambda-1}$. Set $h:=g_{\lambda-1}\circ\cdots\circ g_1$ and $W'':=W_{\lambda-1}$. By \cite[Lemma 3.10.11]{BCHM10}, $h|_{E_1}:E_1\dashrightarrow E_{1,W"}$ is birational and $(W'',B_{W"})$ is dlt, in particular $E_{1,W"}$ is normal by \cite[Theorem 4.16, p.164]{Kollar13}. Set $T:=g_\lambda(E_{1_W"})$ then we have the following commutative diagram: 
\begin{center}
\begin{tikzpicture}[scale=2.0]
\node (A) at (0,0) {$W_\lambda$};
\node (A1) at (0,1) {$W"$};
\node (B) at (-1.5,0) {$W$};
\node (B1) at (-1.5,1) {$W'$};
\node (C) at (0.4,-0.4) {$T$.};
\node (C1) at (0.4,1.4) {$E_{1,W"}$};
\node (D) at (-1.9,-0.4) {$S$};
\node (D1) at (-1.9,1.4) {$E_1$};
\node (AC) at (0.2,-0.2) {\rotatebox{135}{$\subset$}};
\node (BD) at (-1.7,-0.2) {\rotatebox{45}{$\subset$}};
\node (AC1) at (0.2,1.2) {\rotatebox{45}{$\supset$}};
\node (BD1) at (-1.7,1.2) {\rotatebox{135}{$\supset$}};
\path[->,font=\scriptsize,>=angle 90]
(B1) edge node[right]{$f$} (B)
(A1) edge node[left]{$g_\lambda$} (A)
(C1) edge (C)
(D1) edge (D);
\path[dashed,->,font=\scriptsize,>=angle 90]
(B1) edge node[below]{$h$} (A1)
(D1) edge node[above]{$h|_{E_1}$} (C1);
\end{tikzpicture}  
\end{center}
For general $t\in T$, set $E_t:=h^{-1}_\ast(E_{1,W"})_t$\,, and let $f_t: E_t\to S_t$ be the Stein factorization of $f|_{E_t}:E_t\to f(E_t)\subset S$. Let $F$ be a general fibre of $f_t$\,, then $D_m|_F$ is ample since by construction $D_m$ is $f$-ample and $F$ is contained in some fibre of $f$. Since $h|_F: F\dashrightarrow F':=h_\ast F$ is birational, then $D_{m,W"}|_{F'}$ is big. On the other hand, since $-D_{m,W"}$ is $g_\lambda$-ample, and since $F'$ is contained in some fibre of $g_\lambda$, we have that $-D_{m,W"}|_{F'}$ is ample, and in consequence a fortiori $F'=\pt$ and so is $F$. In other word, $E_t$ is not contracted by $f$. Moreover, by construction $(E_{1,W"})_t$ is covered by $L_{W"}$-trivial rational curves, hence for any general $t\in T$,  $E_t$ is covered by $f^\ast L$-trivial rational curves, and the theorem is thus proved.
\end{proof}

\section{Bigness of the tangent bundle}

In this section we will deduce {\hyperref[mainthm_DPP]{Theorem \ref*{mainthm_DPP}}} from {\hyperref[thm_cover-rc]{Theorem \ref*{thm_cover-rc}}}. It is based on the following observation:

\begin{lemma}
\label{lemma_non-cover-rc}
Let everything as in {\hyperref[sec_setup]{\S \ref*{sec_setup}}}. Then $V$ is not covered by rational curves that intersect trivially with $H$.
\end{lemma}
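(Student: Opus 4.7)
The plan is to exploit the canonical section of $\scrO_V(H)$ coming from the direct summand $\scrO_X\subseteq E$. The composition $p^*\scrO_X\hookrightarrow p^*E\twoheadrightarrow\scrO_V(H)$ yields a non-zero global section $s\in\Coh^0(V,\scrO_V(H))\simeq\Coh^0(X,T_X)\oplus\Coh^0(X,\scrO_X)$ (corresponding to $1\in\Coh^0(X,\scrO_X)$). Its zero scheme $Y_0:=(s=0)$ is a prime divisor on $V$ with $Y_0\sim H$; via $p$ it is canonically identified with the projective subbundle $\PP(T_X)\hookrightarrow V$, and the complement $V\setminus Y_0$ is naturally isomorphic to the total space $\mathrm{Tot}(\Omega_X)\to X$ of the cotangent bundle of $X$ (under this identification, the section $Y\simeq X$ arising from the quotient $E\twoheadrightarrow\scrO_X$ becomes the zero section).

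Suppose for contradiction that $V$ is covered by $H$-trivial rational curves. Since $Y_0\sim H$ is effective, every such curve $\Gamma$ satisfies $\Gamma\cdot Y_0=0$, whence either $\Gamma\subseteq Y_0$ or $\Gamma\cap Y_0=\emptyset$. Hence points of $V\setminus Y_0$ must be covered by complete rational curves lying in $\mathrm{Tot}(\Omega_X)$; since the fibres of $\mathrm{Tot}(\Omega_X)\to X$ are affine, such a curve $\Gamma$ projects non-constantly onto a rational curve $C\subseteq X$ and realises as a non-zero section of $\nu_X^*\Omega_X$ over $\PP^1=\tilde C$, where $\nu_X:\PP^1\to X$ parametrises $C$. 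By nefness of $T_X$ and ampleness of $-K_X$, one has $\nu_X^*T_X\simeq\bigoplus_{i=1}^n\scrO(a_i)$ with $a_i\geqslant 0$ and $\sum_i a_i=-K_X\cdot C>0$; in particular, a non-zero section of $\nu_X^*\Omega_X$ exists only when some $a_i=0$, and its value at a point $\tilde x\in\PP^1$ lies in a proper subspace of $\Omega_X|_x$ (namely, the annihilator of the strictly positive part $T^{>0}|_x\subseteq T_xX$), of codimension at least $1$. A parallel analysis shows that $H$-trivial rational curves contained in $Y_0=\PP(T_X)$ correspond to rational curves on $X$ whose pullback of $T_X$ has a trivial summand, together with a choice of such a summand at each point.

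The main obstacle is to show that, as the pair $(\nu_X,\alpha)$ varies, these proper subspaces of $\Omega_X|_x$ cannot fill it up for general $x\in X$, and likewise that the choices of trivial summands do not exhaust $\PP(T_X|_x)$. This will be done via the structure of rational curves on CP-manifolds ({\hyperref[prop_fam-rc-CP-mfd]{Proposition \ref*{prop_fam-rc-CP-mfd}}}) combined with {\hyperref[assumption_dim-FT]{Assumption \ref*{assumption_dim-FT}}}: the covering would yield, through each general point of $X$, an $(n-1)$-dimensional family of rational curves whose normal bundle in the relevant universal family is numerically flat fibrewise, and running the Hilbert-scheme/evaluation argument of Step~3 of the proof of {\hyperref[prop_codim-B+]{Proposition \ref*{prop_codim-B+}}} then forces every elementary extremal contraction of $X$ to be a $\PP^1$-fibration, contradicting the non-flag-type hypothesis. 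The crux of the argument is therefore to translate the evaluation surjectivity into the numerical flatness of a normal bundle, exactly in the spirit of the key calculation of {\hyperref[sec_codim]{\S \ref*{sec_codim}}}.
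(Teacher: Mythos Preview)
The paper's proof is an eight-line argument using only the canonical-bundle formula \eqref{formulae_canonical} and elementary deformation theory of rational curves. If $\phi:\PP^1\to V$ is an $H$-trivial rational curve through a general point of $V$, then $\phi$ is free, so $\phi^\ast T_V$ is globally generated; but $-K_V\sim(n+1)H$ gives $\deg\phi^\ast T_V=0$, forcing $\phi^\ast T_V\simeq\scrO_{\PP^1}^{\oplus 2n}$. The generic injection $T_{\PP^1}\hookrightarrow\phi^\ast T_V$ then produces a nonzero section of $\scrO_{\PP^1}(-2)^{\oplus 2n}$, which is absurd. Note that this uses neither the nefness of $T_X$ nor {\hyperref[assumption_dim-FT]{Assumption \ref*{assumption_dim-FT}}}; it works for $\PP(T_X\oplus\scrO_X)$ over any smooth projective $X$.

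Your geometric setup---identifying $V\setminus Y_0$ with the total space of $\Omega_X$ and reading an $H$-trivial curve there as a section of $(p\circ\nu)^\ast\Omega_X$ supported on the trivial summands---is correct. The gap is in the last paragraph. You assert that the covering hypothesis produces, for each minimal family $M_i$, the fibrewise numerical flatness of $N_{(U_i)_x/M_i}$, which is exactly the input required to run Step~3 of {\hyperref[prop_codim-B+]{Proposition \ref*{prop_codim-B+}}}. But in that proposition the numerical flatness was \emph{derived} from the vanishing of the higher Segre classes of $\Omega_X$ (Step~2), itself a consequence of Nakamaye's theorem applied to a specific divisorial component of $\BB_+(L)$. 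You give no mechanism to deduce the same conclusion from the statement that the ``trivial-direction'' subspaces of $\Omega_X|_x$ exhaust $\Omega_X|_x$ as the curve varies: the projected curves need not lie in any of the minimal families $M_i$, and even when they do, a union of proper subspaces filling $\Omega_X|_x$ imposes no evident constraint on the normal bundle of a fixed $M_i$. The proposed reduction to the flag-type contradiction is therefore unjustified, and the argument is incomplete. Given the elementary route above, the detour through the machinery of {\hyperref[sec_codim]{\S \ref*{sec_codim}}} and the reliance on {\hyperref[assumption_dim-FT]{Assumption \ref*{assumption_dim-FT}}} are in any case unnecessary for this lemma.
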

\begin{proof}
If otherwise, let $\phi:\PP^1\to V$ be a rational curve (the normalization of its image) passing through a general point of $V$ such that $\deg\phi^\ast H=0$. By \cite[II.3.11 and II.3.11.1.1, p.118]{Kollar96} $\phi$ is free in the sense of \cite[II.3.1, p.113]{Kollar96}, then by \cite[\S II.3, Definition-Proposition 3.8, p.116]{Kollar96} $\phi^\ast T_V$ is globally generated. Moreover, by \eqref{formulae_canonical} we have $-K_V\sim (n+1)H$, hence $\deg\phi^\ast(-K_V)=0$, and consequently $\phi^\ast T_V$ must be trivial over $\PP^1$, i.e. $\phi^\ast T_V\simeq\scrO_{\PP^1}^{\oplus 2n}$. On the other hand, we have a generic injection $T_{\PP^1}\to\phi^\ast T_V$, which is indeed an injective morphism (see~\cite[\S II.3.13 and its proof, p.119]{Kollar96}), and this induces a non-zero section in $\Coh^0(\PP^1,\scrO(-2)^{\oplus 2n})$, which is absurd.   
\end{proof}

\begin{proof}[Proof of {\hyperref[mainthm_DPP]{Theorem \ref*{mainthm_DPP}}}]
It suffices to show that the image of $\Delta_{V/X}$ is not contained in $\BB_+(L)$ where $\Delta_{V/X}:V\hookrightarrow W$ is the diagonal embedding. Indeed, if $\Image(\Delta_{V/X})$ is not contained in $\BB_+(L)=\Bs|B_0|_{\red}$\,, then 
\[
\left|\Delta_{V/X}^\ast B_0\right|=\left|k_0\mu_0(nH+\epsilon p^\ast K_X)\right|
\]
contains a non-zero effective divisor, by Kodaira's lemma (\cite[Vol.I, \S 2.2.A, Corollary 2.2.7, p.141]{Laz04}) this implies that $H$ is big.

Now we prove that $\Image(\Delta_{V/X})\not\subset\BB_+(L)$. Assume by contradiction that $\Image(\Delta_{V/X})$ is contained in $\BB_+(L)$, then $\pr_i|_{\BB_+(L)}:\BB_+(L)\to V$ is surjective for every $i$. Since $L$ is $\pr_i$-ample, any curve intersecting trivially with $L$ is not contracted by $\pr_i$\,, hence {\hyperref[thm_cover-rc]{Theorem \ref*{thm_cover-rc}}} implies that $V$ is covered by rational curves that intersect trivially with $H$, but this contradicts with {\hyperref[lemma_non-cover-rc]{Lemma \ref*{lemma_non-cover-rc}}}. Our main theorem is thus proved.
\end{proof}




\bibliography{CP}

@article{BBP13,
author  = "Boucksom, S{\'e}bastien and Broustet, Ama{\"e}l and Pacienza, Gianluca",
title   = "{Uniruledness of Stable Base Loci of Adjoint Linear Systems via Mori Theory}",
year    = "2013",
journal = "Mathematische Zeitschrift",
volume  = "275",
number  = "1-2",
pages   = "499-507",
}

@article{BCHM10,
author    = "Caucher Birkar and Paolo Cascini and Christopher Derek Hacon and James McKernan",
title     = "{Existence of Minimal Models for Varieties of Log General Type}",
year      = "2010",
journal   = "Journal of The American Mathematical Society",
volume    = "23",
number    = "2",
pages     = "405-468",
}

@article{Ber09,
author  = "Berndtsson, Bo",
title   = "{Curvature of Vector Bundles Associated to Holomorphic Fibrations}",
year    = "2009",
journal = "Annals of Mathematics",
volume  = "169",
number  = "2",
pages   = "531-560",
}

@article{Bir12,
author    = "Caucher Birkar",
title     = "{Existence of Log Canonical Flips and a Special LMMP}",
year      = "2012",
journal   = "Publications mathématiques de l'IH\'ES",
volume    = "115",
pages     = "325-368",
}

@article{Bir16,
author    = "Caucher Birkar",
title     = "{Existence of Flips and Minimal Models for {$3$}-folds in char {$p$}}",
year      = "2016",
journal   = "Annales scientifique de l'{\'E}cole normale sup{\'e}rieure",
volume    = "49",
number    = "1",
pages     = "169-212",
}

@article{Bir17,
author    = "Caucher Birkar",
title     = "{The Augmented Base Locus of Real Divisors over Arbitrary Fields}",
year      = "2017",
journal   = "Mathematische Annalen",
volume    = "368",
number    = "3-4",
pages     = "905-921",
}

@article{BP08,
author    = "Berndtsson, Bo and P{\u a}un, Mihai",
title     = "{Bergman Kernels and the Pseudoeffectivity of Relative Canonical Bundles}",
journal   = "Duke Mathematical Journal",
year      = "2008",
volume    = "145",
number    = "2",
pages     = "341-378",
}

@article{Bou04,
author    = "Sébastien {Boucksom}",
title     = "{Divisorial Zariski Decomposition on Compact Compelx Manifolds}",
year      = "2004",
journal   = "Annales scientifique de l'\'Ecole normale supérieure",
volume    = "37",
number    = "4",
pages     = "45-67",
publisher = "Société mathématique de France",
}

@phdthesis{Cao13,
author    = "Junyan Cao",
title     = "Th{\'e}or{\`e}mes d'annulation et th{\'e}or{\`e}mes de structure sur les vari{\'e}t{\'e}s k{\"a}hl{\'e}riennes compactes",
school    = "Universit{\'e} de Grenoble (ancienne Universit{\'e} Grenoble-Alpes)",
year      = "2013",
}

@article{CCM21,
author    = "Campana, Fr{\'e}d{\'e}ric and Cao, Junyan and Matsumura, Shin-ichi",
title     = "{Projective Klt Pairs with Nef Anti-canonical Divisor}",
year      = "2021",
journal   = "Algebraic Geometry",
volume    = "8",
number    = "4",
pages     = "430-464",
}

@article{CP91,
author    = "Frédéric Campana and Thomas Peternell",
title     = "{Projective Manifolds whose Tangent Bundles are Numerically Effective}",
year      = "1991",
journal   = "Mathematische Annalen",
volume    = "289",
number    = "1",
pages     = "169-187",
publisher = "Springer-Verlag" 
}

@article{CP93,
author    = "Campana, Fr{\'e}d{\'e}ric and Peternell,Thomas",
title     = "{4-folds with Numerically Effective Tangent Bundles and second Betti Numbers Greater than one}",
year      = "1993",
journal   = "Manuscripta Mathematica",
volume    = "79",
pages     = "225-238",
publisher = "Springer-Verlag", 
}

@book{Dem10,
author    = "Jean-Pierre {Demailly}",
title     = "{Analytic Methods in Algebraic Geometry}",
year      = "2010",
publisher = "Higher Education Press; International Press",
address   = "Beijing; Sommerville",
series    = "Surveys of Modern Mathematics",
volume    = "1"
}

@article{DPS94,
author    = "Jean-Pierre {Demailly} and Thomas {Peternell} and Michael {Schneider}",
title     = "{Compact Complex Manifolds with Numerically Effective Tangent Bundles}",
year      = "1994",
journal   = "Journal of Algebraic Geometry",
volume    = "3",
number    = "2",
pages     = "295-345",
publisher = "American Mathematical Society"
}

@misc{DPP15,
author       = "Demailly, Jean-Pierre and P{ă}un, Mihai and Peternell, Thomas",
year         = "2015",
title        = "{Fano Manifolds with Big and Nef Tangent Bundles}",
howpublished = {Unpublished notes, available on J.-P. Demailly's personal website \urlstyle{rm}\url{https://www-fourier.ujf-grenoble.fr/~demailly/source_files/en_cours/big-nef/}}
}

@article{DWZZ24,
author    = "Deng, Fusheng and Wang, Zhiwei and Zhang, Liyou and Zhou, Xiangyu",
title     = "{New Characterizations of Plurisubharmonic Functions and Positivity of Direct Image Sheaves}",
year      = "2024",
journal   = "American Journal of Mathematics",
volume    = "146",
number    = "3",
pages     = "1-32",
}

@article{EGA4-3,
shorthand= "EGAI\!V-3",
author   = "Alexander Grothendieck",
title    = "{\'El{\'e}ment de g{\'e}om{\'e}trie alg{\'e}brique: I\!V. \'Etudes locales des sch{\'e}mas et des morphisms de sch{\'e}mas, Troisi{\`e}me partie}",
year     = "1966",
journal  = "Publications mathm{\'e}matiques de l'I.H.\'E.S",
volume   = "28",
pages    = "5-255"
}

@article{EGA4-4,
shorthand= "EGAI\!V-4",
author   = "Alexander Grothendieck",
title    = "{\'El{\'e}ment de g{\'e}om{\'e}trie alg{\'e}brique: I\!V. \'Etudes locales des sch{\'e}mas et des morphisms de sch{\'e}mas, Quatri{\`e}me partie}",
year     = "1967",
journal  = "Publications mathm{\'e}matiques de l'I.H.\'E.S",
volume   = "32",
pages    = "5-361",
}

@article{ELMNP06,
author   = "Ein, Lawrence and Lazarsfeld, Robert and Musta{\cb{t}}{\u{a}}, Micea and Nakamaye, Michael and Popa, Mihnea",
title    = "{Asymptotic Invariants of Base Loci}",
year     = "2006",
journal  = "Annales de l'Institut Fourier",
volume   = "56",
number   = "6",
pages    = "1701-1734",
}

@book{Ful84,
author    = "William {Fulton}",
title     = "{Intersection Theory}",
year      = "1984",
publisher = "Springer-Verlag",
address   = "Berlin Heidelberg",
volume    = "2",
series    = "Ergebnisse der Mathematik und ihrer Grenzgebiete 3. Folge"
}

@article{Grif70,
author   = "Griffiths, Phillip Augustus",
title    = "{Periods of Integrals on Algebraic Manifolds, III (Some Global Differential-geometric Properties of the Period Mapping)}",
year     = "1970",
journal  = "Publications math{\'e}matiques de l'IH{\'E}S",
volume   = "38",
pages    = "125-180",
}

@article{GZ15,
author    = "Guan, Qi'an and Zhou, Xiangyu",
title     = "{A Solution of an $L^2$ Extension Problem with an Optimal Estimate and Applications}",
year      = "2015",
journal   = "Annals of Mathematics",
volume    = "181",
number    = "3",
pages     = "1139-1208",
}

@article{GZ17,
author    = "Guan, Qi'an and Zhou, Xiangyu",
title     = "{Strong Openness of Multiplier Ideal Sheaves
and Optimal $L^2$ Extension}",
year      = "2017",
journal   = "Science China Mathematics",
volume    = "60",
number    = "6",
pages     = "967-976",
}

@article{HIM22,
author    = "Hosono, Genki and Iwai, Mastaka and Matsumura, Shin-ichi",
title     = "{On Projective Manifolds with Psef Tangent Bundle}",
year      = "2022",
journal   = "Journal of the Institute of Mathematics of Jussieu",
volume    = "21",
number    = "5",
pages     = "1801-1830",
}

@inproceedings{HPS18,
author    = {Christopher Derek {Hacon} and Mihnea {Popa} and Christian {Schnell}},
title     = {{Algebraic Fibre Spaces over Abelian Varieties: Around a Recent Theorem by Cao and P\u aun}},
booktitle = {Local and Global Methods in Algebraic Geometry},
series    = {Contemporary Mathematics},
editor    = {Nero {Budur} and Tommaso {de Fernex} and Roi {Docampo} and Kevin {Tucker}},
year      = "2018",
volume    = "712",
pages     = "143-195",
publisher = "American Mathematical Society",
address    = "Providence, RI"
}

@article{HSW81,
author   = "Howard, Alan and Smyth, Brian and Wu, Hung-Hsi",
title    = "{On Compact K{\"a}hler Manifolds of Nonnegative Bisectional Curvature, I}",
year     = "1981",
journal  = "Acta Mathematica",
volume   = "147",
pages    = "51-56",
}

@inproceedings{Hwa06,
author    = "Hwang, Jun-Muk",
title     = "{Rigidity of Rational Homogeneous Spaces}",
booktitle = "{Proceedings of the International Congress of Mathematicians Madrid, August 22-30, 2006}",
editor    = "Sanz-Sol{\'e}, Marta and Soria, Javier and Varona, Juan Luis and Verdera, Joan",
year      = "2006",
pages     = "613-626",
publisher = "European Mathematical Society",
address   = "Z{\"u}rich",
}

@article{Kan16,
author    = "Kanemitsu, Akihiro",
title     = "{Fano {$n$}-folds with Nef Tangent Bundle and Picard Number Greater than {$n-5$}}",
journal   = "Mathematische Zeitschrift",
year      = "2016",
volume    = "284",
number    = "1-2",
pages     = "195-208",
}

@article{Kaw82,
author    = "Kawamata, Yujiro",
title     = "{Kodaira Dimension of Algebraic Fibre Spaces over Curves}",
year      = "1982",
journal   = "Inventiones Mathematicae",
volume    = "66",
number    = "1",
pages     = "57-71",
}

@misc{KLOS21,
author       = "Koll{\'a}r, J{\'a}nos and Lieblich, Max and Olsson, Martin and Sawin, Will",
title        = "{The Zariski Topology, Linear Systems, and
Algebraic Varieties}",
howpublished = {Manuscript, available on M. Olsson's website \urlstyle{rm}\url{https://sites.google.com/berkeley.edu/martin-olsson/articles-and-notes}},
year         = "2021",
}

@book{KM98,
author    = "J{\'a}nos {Koll{\'a}r} and Shigefumi {Mori}",
title     = "{Birational Geometry of Algebraic Varieties}",
year      = "1998",
publisher = "Cambridge University Press",
address   = "Cambridge",
volume    = "134",
series    = "Cambridge Tracts in Mathematics"
}

@book{Kollar96,
author    = "J{\'a}nos Koll{\'a}r",
title     = "{Rational Curves on Algebraic Varieties}",
year      = "1996",
publisher = "Springer-Verlag",
address   = "Berliln Heidelberg",
volume    = "3.Folge, Band 32",
series    = "Ergebnisse der Mathematik
und ihrer Grenzgebiete",
note      = "Corrected Second Printing 1999"
}

@book{Kollar13,
author    = "J{\'a}nos Koll{\'a}r",
Nameaddon = {with the collaboration of S{\'a}ndor J{\'o}zsef Kov{\'a}cs},
title     = "{Singularities of the Minimal Model Program}",
year      = "2013",
publisher = "Cambridge University Press",
address   = "Cambridge",
volume    = "200",
series    = "Cambridge Tracts in Mathematics",
}

@book{Laz04,
author    = "Robert {Lazarsfeld}",
title     = "{Positivity in Algebraic Geometry I \& II}",
year      = "2004",
publisher = "Springer-Verlag",
address   = "Berlin Heidelberg",
volume    = "48 \& 49",
series    = "Ergebnisse der Mathematik und ihrer Grenzgebiete. 3. Folge"
}

@inproceedings{Mat18,
author    = "Matsumura, Shin-ichi",
title     = "{Variation of Numerical Dimension of Singular Hermitian Line Bundles}",
booktitle = "{Geometric Complex Analysis: In Honor of Kang-Tae Kim’s 60th Birthday, Gyeongju, Korea, 2017}",
series    = "Springer Proceedings of Mathematics {\&} Statistics",
editor    = "Byun, Jisoo and Cho, Hong Rae and Kim, Sung Yeon and Lee, Kang-Hyurk and Park, Jong-Do",
year      = "2018",
volume    = "264",
pages     = "247-255",
publisher = "Springer Singapore",
address   = "Singapore",
}

@article{Mat22,
author    = "Matsumura, Shin-ichi",
title     = "{Injectivity Theorems with Multiplier Ideal Sheaves for Higher Direct Images under K{\"a}hler Morphisms}",
year      = "2022",
journal   = "Algebraic Geometry",
volume    = "9",
number    = "2",
pages     = "122-158",
}

@article{Mok88,
author   = "Mok, Ngaiming",
title    = "{The Uniformisation Theorem for Compact Kähler Manifolds of Nonnegative Holomorphic Bisectional Curvature}",
year     = "1988",
journal  = "Journal of Differential Geometry",
volume   = "27",
number   = "2",
pages    = "179-214"
}

@article{Mok02,
author   = "Mok, Ngaiming",
title    = "{On Fano Manifolds with Nef Tangent Bundles
Admitting $1$-dimensional Varieties of Minimal Rational Tangents}",
year     = "2002",
journal  = "Transactions of the American Mathematical Society",
volume   = "354",
number   = "7",
pages    = "2639-2658",
}

@article{Mor79,
author   = "Shigefumi Mori",
title    = "{Projective Manifolds with Ample Tangent Bundles}",
year     = "1979",
journal  = "Annals of Mathematics",
volume   = "110",
number   = "3",
pages    = "593-606" 
}

@article{MOSCW15,
author   = "Roberto Muñoz and Gianluca Occheta and Solá Conde, Luis Eduardo and Kiwamu Watanabe",
title    = "Rational Curves, Dykin Diagrams and Fano Manifolds with Nef Tangent Bundle",
journal  = "Mathematische Annalen",
volume   = "361",
number   = "3-4",
year     = "2015",
pages    = "583-609",
}

@article{MOSCWW15,
author   = "Roberto Muñoz and Gianluca Occheta and Solá Conde, Luis Eduardo and Kiwamu Watanabe and Jarosław Antoi {Wiśniewski}",
title    = "A Survey on the Campana-Peternell Conjecture",
journal  = "Rendiconti dell'Istituto di Matematica dell'Università di Trieste",
volume   = "47",
year     = "2015",
pages    = "127-185"
}

@article{MZ86,
author    = "Ngaiming Mok and Jia-Qing Zhong",
title     = "{Curvature Characterization of Compact Hermitian Symmetric Spaces}",
year      = "1986",
journal   = "Journal of Differential Geometry",
volume    = "23",
number    = "1",
pages     = "15-67",
}

@article{Nak00,
author    = "Nakamaye, Michael",
title     = "{Stable Base Loci of Linear Series}",
year      = "2000",
journal   = "Mathematische Annalen",
volume    = "318",
number    = "4",
pages     = "837-847",
}

@book{Nak04,
author    = "Noboru {Nakayama}",
title     = "{Zariski-decomposition and Abundance}",
year      = "2004",
publisher = "Mathematical Society of Japan",
address   = "Tokyo",
volumes   = "14",
series    = "MSJ Memoir" 
}

@article{OSCWW17,
author    = "Ochetta, Gianluca and Sol{\'a} Conde, Luis Eduardo and Watanabe, Kiwamu and Wiśniewski, Jarosław Antoi",
title     = "{Fano Manifolds whose Elementary Contractions are Smooth $\mathbb{P}^1$-Fibrations: a Geometric Characterization of Flag Varieties}",
year      = "2017",
journal   = "Annali della Scuola Normale Superiore di Pisa - Classe di Scienze",
volume    = "XVII",
number    = "2",
pages     = "573-607",
}

@misc{Pau16,
author       = "P{\u a}un, Mihai",
title        = "{Singular Hermitian Metrics and Positivity of Direct Images of Pluricanonical Bundles}",
howpublished = {Survey\url{rm}\url{https://arxiv.org/abs/1606.00174}},
year         = "2016",
}

@article{PT18,
author    = {Mihail {P\u aun} and Shigeharu {Takayama}},
title     = "{Positivity of Twisted Relative Pluricanonical Bundles and Their Direct Images}",
year      = {2018},
journal   = "Journal of Algebraic Geometry",
volume    = "27",
number    = "2",
pages     = "211-272",
publisher = {American Mathematical Society}
}

@article{SCW04,
author   = "Sol{\'a} Conde, Luis Eduardo and Wi{\'s}niewski, Jaros\l{}aw Antoni",
title    = "{On Manifolds whose Tangent Bundle is Big and $1$-Ample}",
journal  = "Proceedings of the London Mathematical Society",
year     = "2004",
volume   = "89",
number   = "3",
pages    = "273-290",
}

@article{SY80,
author    = "Yum-Tong Siu and Shing-Tung Yau",
title     = "{Compact Kähler Manifolds of Positive Bisectional Curvature}",
year      = "1980",
journal   = "Inventiones mathematicae",
volume    = "59",
number    = "2",
pages     = "189-204"
}

@article{Tak08,
author    = "Takayama, Shigeru",
title     = "{On the Uniruledness of Stable Base Loci}",
year      = "2008",
journal   = "Journal of Differential Geometry",
volume    = "78",
number    = "3",
pages     = "521-541",
}

@misc{Tsu07,
author       = "Tsuji, Hajime",
title        = "{Curvature Semipositivity of Relative Pluricanonical Systems}",
year         = "2007",
howpublished = {Preprint\urlstyle{rm}\url{https://arxiv.org/abs/math/0703729}}
}

@book{Uen75,
author    = "Kenji {Ueno}",
title     = "{Classification Theory of Algebraic Varieties and Compact Complex Spaces}",
year      = "1975",
publisher = "Springer-Verlag",
address   = "Berlin Heidelberg",
volume    = "439",
series    = "Lecture Notes in Mathematics"
}

@article{Wang21,
author       = "Juanyong Wang",
title        = "{On the Iitaka Conjecture $C_{n,m}$ for K{\"a}hler Fibre Spaces}",
year         = "2021",
journal      = "Annales de la Faccult{\'e} des sciences de Toulouse",
volume       = "30",
number       = "4",
pages        = "813-897",
}

@phdthesis{Wang-thesis,
title       = {{Positivity of direct images and projective varieties with nonnegative curvature}},
author      = {Wang, Juanyong},
school      = {{Institut Polytechnique de Paris}},
year        = {2020},
month       = "Aug",
note        = {\urlstyle{rm}\url{https://tel.archives-ouvertes.fr/tel-02982921}},
}

@article{Wat15,
author   = "Watanabe, Kawamu",
title    = "{Fano Manifolds with Nef Tangent Bundle and Large Picard Number}",
year     = "2015",
journal  = "Proceedings of the Japan Academy, Series A, Mathemtical Sciences",
volume   = "91",
number   = "6",
pages    = "89-94",
}

@article{Wu22,
author   = "Wu, Xiaojun",
title    = "{Strongly Pseudo-Effective and Numerically Flat Reflexive Sheaves}",
journal  = "Journal of Geometric Analysis",
year     = "2022",
volume   = "32",
number   = "4",
pages    = "124",
}

\section*{Erratum}
\textcolor{red}{In Step 2 of the proof of Lemma 4.3 on page 17, the author claims that the $D$ is equidimensional over $V_n$ since it is horizontal with respect to $\pi_1$. But this claim is wrong, and thus invalidates the proof of Lemma 4.3 and that of the main theorem. Indeed, the following simple example provides a counterexample to this claim:
Consider the projection $\pr_1: X:=\PP^2\times\PP^2\to \PP^2$, the divisor $D\in|\scrO_{\PP^2\times\PP^2}(1,1)|$ defined by the homogeneous equation $x_1y_0+x_0y_1=0$ is horizontal with respect to $\pr_1$, where $[x_0:x_1:x_2]$ (resp. $[y_0:y_1:y_2]$) are homogeneous coordinates for the first (resp. second) copy of $\PP^2$ in $X$. Nevertheless, if we set $t:=[0:0:1]\in\PP^2$, then it is easy to see that $D$ contains the fibre $X_t:=\{t\}\times\PP^2$. Moreover, one can easily check that $\ord_{(t,t)}(D)=2$; indeed, if we define the linear series
\[
\Lambda:=\Coh^0(X,\scrO_{\PP^2\times\PP^2}(1,1)\otimes\mathfrak{m}^2_{(t,t)})\subseteq\Coh^0(X,\scrO_{\PP^2\times\PP^2}(1,1)),
\] 
then $\Lambda$ is generated by $x_0y_0,x_0y_1,x_1y_0,x_1y_1$ and one finds that $\Bs|\Lambda|=\{t\}\times\PP^2\cup\PP^2\times\{t\}$.}  

\end{document}